\documentclass[preprint,12pt]{elsarticle}
\usepackage{amsfonts}
\usepackage{amssymb}
\usepackage{graphicx}
\usepackage{amsthm}
\usepackage{lmodern}
\usepackage{hyperref}
\usepackage{caption}
\usepackage{systeme}
\usepackage{float}         
\usepackage{caption}       
\usepackage{subcaption}    

\usepackage{epstopdf}
\usepackage{algorithmic}
\newtheorem{theorem} {Theorem}[section]
\newtheorem{proposition}  [theorem]{Proposition}
\newtheorem{definition}    [theorem]{Definition}
\newtheorem{lemma}[theorem]{Lemma}
\newtheorem{remark}[theorem]{Remark}
\newtheorem{assumption}[theorem]{Assumption}
\newtheorem{corollary}[theorem]{Corollary}

\usepackage{amssymb}
\usepackage{amsmath}

\begin{document}

\begin{frontmatter}



\title{Energy Decay Induced by Localized Coercivity in the 3D Navier--Stokes Equations}

\author[]{Amadou Cissé}\ead{amadou.cisse@univ-lorraine.fr}

\affiliation{organization={CRAN-UMR-CNRS-7039 University of Lorraine, F-57000, France}
             }


\begin{abstract}
Energy-based analysis of the three-dimensional incompressible Navier--Stokes
equations with spatially localized dissipation is developed at the level of
kinetic energy and Leray--Hopf weak solutions.
A localized coercivity property linking viscosity and damping to global $L^2$
control is introduced and related to observability and spectral inequalities for
the associated Stokes operator.
Under this condition, exponential energy decay is established without regularity
or linearization assumptions, and time-dependent damping is treated within a
nonautonomous framework.
A finite-dimensional spectral quantity is proposed as a computable proxy for
coercivity, yielding explicit decay estimates for reduced-order models.
\end{abstract}



\begin{keyword}
Navier--Stokes equations, weak solutions, localized dissipation,
energy decay, coercivity, observability.


\MSC[]35Q30, 35B40, 35K55, 47D06, 93D05
\end{keyword}

\end{frontmatter}

\section{Introduction}

The incompressible Navier--Stokes equations constitute a fundamental mathematical
model for viscous fluid flows and play a central role in hydrodynamics,
aerodynamics, and turbulence modeling.
In three space dimensions, these equations exhibit a wide range of complex
phenomena, including instabilities, transition to turbulence, and chaotic
dynamics.
From a mathematical standpoint, the question of global existence and uniqueness
of smooth solutions remains one of the major open problems in analysis
\cite{Fefferman2006,TemamNS2001,FoiasTemamTurbulence2001}.

\medskip

Let $\mathbb{T}^3=(\mathbb{R}/\mathbb{Z})^3$ denote the three-dimensional torus,
which models a periodic flow domain without physical boundaries.
This setting eliminates boundary effects and allows one to isolate the role of
internal dissipation and volumic actuation. The incompressible Navier–Stokes equations with localized volumic damping read
\begin{equation}
\label{eq:NS_controlled}
\begin{cases}
\partial_t u + (u\cdot\nabla)u - \nu \Delta u + \nabla p
= - \chi_\omega(x)\, K(\rho(t))\, u,\\[1mm]
\nabla\cdot u = 0,\\
u(\cdot,0)=u_0,
\end{cases}
\end{equation}
where $u:\mathbb{T}^3\times\mathbb{R}_+\to\mathbb{R}^3$ denotes the velocity field,
$p$ is the pressure enforcing incompressibility, and $\nu>0$ is the kinematic
viscosity.

The right-hand side represents a \emph{localized volumic dissipation} acting on
a measurable subset $\omega\subset\mathbb{T}^3$.
The function $\chi_\omega\in L^\infty(\mathbb{T}^3)$ is assumed nonnegative and
typically denotes either the indicator function of $\omega$ or a smooth cutoff
supported in $\omega$.
From a physical viewpoint, this term may be interpreted as an idealized model of
distributed mechanisms extracting kinetic energy from the flow in a prescribed
region.
It is not intended to represent a realizable control input with finitely many
actuators, but rather a continuous volumic dissipation model used to analyze
the effect of spatially localized energy extraction on the global kinetic
energy balance.

The scalar gain $K(\rho(t))\ge 0$ is allowed to vary in time and depends on a
scheduling variable $\rho(t)$, assumed measurable and bounded.

\medskip

These intrinsic analytical difficulties have direct consequences for stability
and control.
In particular, the design of feedback laws ensuring global stabilization of
three-dimensional Navier--Stokes flows in strong norms is currently out of reach.
As a result, rigorous results are typically restricted to two-dimensional
models, to local stabilization around sufficiently regular solutions, or to
stability notions formulated at the level of weak solutions; see, for instance,
\cite{Barbu2003,BadraTakahashi2009a,BadraTakahashi2009b,Raymond2006}.
This limitation reflects the absence of a global regularity theory in three
dimensions rather than a purely technical obstruction.

Early contributions to stabilization and control of the Navier--Stokes equations
focused on open-loop or feedback strategies around steady or time-dependent
reference flows, often under smallness assumptions on the initial data or on the
perturbations.
Representative works include those of Barbu \cite{Barbu2003}, Badra and Takahashi
\cite{BadraTakahashi2009a}, and boundary control approaches based on linearization
and spectral analysis \cite{Raymond2006}.
While these results provide important insight into stabilization mechanisms,
their applicability is generally confined to regimes where sufficient solution
regularity can be guaranteed.

A complementary and analytically robust perspective relies on energy methods and
dissipativity arguments.
For incompressible flows, the nonlinear convection term is skew-symmetric with
respect to the $L^2$ inner product and therefore does not contribute to the
kinetic energy balance.
This structural property remains valid for Leray--Hopf weak solutions and enables
stability analyses that do not rely on strong regularity assumptions
\cite{TemamNS2001,FoiasTemamTurbulence2001}.
It has motivated the introduction of damping mechanisms aimed at enhancing the
natural viscous dissipation of the system.
Global damping terms are known to improve well-posedness and decay properties in
several three-dimensional settings, although such mechanisms may be difficult to
justify from a physical or modeling standpoint
\cite{cai2008weak,zhou2012regularity,vasseur2016global,raugel1993navier}.

In realistic configurations, dissipation or actuation mechanisms are often
spatially localized.
This naturally leads to the study of Navier--Stokes equations with localized
volumic or boundary damping.
In this setting, a central analytical difficulty is to relate localized
dissipation to global energy decay.
This issue is closely related to \emph{coercivity} and \emph{observability}
properties, and more generally to unique continuation and spectral inequalities.

On periodic domains such as $\mathbb{T}^3$, the Stokes operator is already
exponentially stable on divergence-free zero-mean fields due to the Poincar\'e
inequality.
As a consequence, the introduction of a localized damping term does not aim at
stabilizing an unstable system, but rather at modifying and enhancing the
underlying energy dissipation mechanism.
The relevant question is therefore not whether decay occurs, but how spatially
localized dissipation contributes quantitatively and structurally to the global
energy balance, and how this contribution can be characterized within the
framework of weak solutions.

For linear parabolic equations, such questions are well understood and are
classically addressed through observability inequalities and Carleman estimates;
see, for instance, the seminal work of Lebeau and Robbiano
\cite{LebeauRobbiano1995} and subsequent developments
\cite{apraiz2014observability}.
Related spectral inequalities quantify the extent to which localized dissipation
controls global norms.

In the context of fluid equations, observability and controllability properties
have also been extensively studied for linearized Navier--Stokes systems,
in particular for Oseen--Stokes type equations.
These results rely on refined Carleman estimates and unique continuation
arguments; see, for instance,
\cite{fursikov1996imanuvilov,fursikov1996local,coron1996global,coron2009null}.
They provide a detailed understanding of observability mechanisms for linearized
fluid models under suitable regularity and geometric assumptions.

In contrast, for nonlinear fluid models such as the Navier--Stokes equations,
the transfer of localized dissipation to global energy control remains delicate
and only partially understood; see, for example, \cite{Shirikyan2007}.
In particular, no global observability inequality at the energy level is
currently available for the three-dimensional Navier--Stokes equations in the
Leray--Hopf framework.

Parallel to these developments, operator-theoretic approaches to
nonautonomous evolution equations provide natural tools for analyzing
systems with coefficients depending on time.
Semigroup and evolution-family techniques \cite{Kato1970,Pazy1983}
allow the treatment of parameter-dependent generators and the derivation
of stability estimates under dissipativity and coercivity assumptions.
These methods are particularly suited to dissipation mechanisms with
time-varying intensity, yet their systematic use in the analysis of
three-dimensional Navier--Stokes equations remains limited.

Recent years have seen renewed interest in stabilization of the
three-dimensional Navier--Stokes equations under localized dissipation or
finite-dimensional feedback mechanisms, particularly in weak or critical
functional settings.
Uniform stabilization results in Sobolev and Besov spaces have been obtained in
\cite{LasieckaPiyasadTriggiani2019,LasieckaPriyasadTriggiani2021}, relying on
refined observability and compactness arguments.
Related approaches based on optimization and receding-horizon methods have been
considered in \cite{Azmi2022}.
These contributions further emphasize the central role of localized coercivity
and observability in stabilization problems.

\medskip

The present work adopts an energy-based analytical approach to the
$3D$ incompressible Navier--Stokes equations with localized
volumic dissipation.
Rather than addressing stabilization in strong norms, the analysis is conducted
in the kinetic energy space $L^2$, which is compatible with the Leray--Hopf
existence theory.
The damping mechanism is modeled as a spatially localized dissipative term with
time-dependent intensity, leading to a family of parameter-dependent linear
operators governing the dissipative component of the dynamics.

A first contribution is the formulation of a localized coercivity property that
provides a quantitative link between viscous diffusion, spatially localized
dissipation, and global $L^2$ energy control.
This formulation is intrinsic to the energy structure of Leray--Hopf solutions
and extends classical coercivity concepts to a nonautonomous dissipative setting.

A second contribution is the introduction of a finite-dimensional spectral
quantity that provides a computable characterization of the coercivity mechanism
on divergence-free reduced spaces.
This construction yields explicit decay estimates and offers a practical
framework for assessing the effectiveness of localized damping in
high-dimensional flow models.
No convergence claim toward the infinite-dimensional problem is made; the
procedure is intended as a consistent proxy for the underlying coercivity
mechanism.

\medskip

In summary, this paper provides an energy-level characterization of how localized
dissipation contributes to global energy decay for the three-dimensional
Navier--Stokes equations through coercivity and observability mechanisms,
without relying on global regularity or linearization.

\medskip

The remainder of the paper is organized as follows.
Section~\ref{sec:energy_stabilization} develops the energy-based framework for
the Navier--Stokes system with localized dissipation and establishes the
fundamental energy inequality at the level of Leray--Hopf weak solutions.
Section~\ref{sec:operator_kato} develops an operator-theoretic formulation of the
dynamics and derives stability estimates for the associated nonautonomous linear
evolution family.
Section~\ref{sec:continuous_observability} discusses continuous observability and
localized coercivity properties, clarifying their role in linking localized
dissipation to global energy decay.
Section~\ref{sec:discrete_coercivity} presents a finite-dimensional spectral
procedure and interprets it as a computable proxy for the continuous coercivity
mechanism.
Concluding remarks and perspectives are given in the final section.

\section{Energy-Based Analysis of Navier--Stokes with Localized Dissipation}
\label{sec:energy_stabilization}

This section establishes the energy-based framework underlying the analysis of
the three-dimensional incompressible Navier--Stokes equations with localized
volumic dissipation.
The focus is on the influence of spatially localized dissipation mechanisms on
the kinetic energy, in a setting compatible with Leray--Hopf weak solutions.

The functional framework relies on the classical divergence-free spaces
\[
H := \Big\{ v\in L^2(\mathbb{T}^3;\mathbb{R}^3)\ :\ \nabla\cdot v = 0,\ 
\int_{\mathbb{T}^3} v\,dx = 0 \Big\},\qquad
V := H^1(\mathbb{T}^3)\cap H,
\]
endowed with their natural norms.
The zero-mean condition ensures the validity of the Poincar\'e inequality on the
periodic domain.

\subsection{Weak solutions and energy balance}

For any initial condition $u_0\in H$, and any bounded measurable gain
$K(\rho(\cdot))$, system \eqref{eq:NS_controlled} admits at least one global
weak solution in the sense of Leray--Hopf.

The analysis relies on the energy balance satisfied by such solutions.

\begin{lemma}[Energy inequality]
\label{lem:energy_ineq}
Let $u$ be a Leray--Hopf weak solution of \eqref{eq:NS_controlled}.
Then, for almost every $t>0$,
\begin{equation}
\label{eq:energy_ineq}
\frac{1}{2}\frac{d}{dt}\|u(t)\|_{L^2}^2
+ \nu \|\nabla u(t)\|_{L^2}^2
+ \int_{\omega} K(\rho(t))\,|u(x,t)|^2\,dx
\le 0.
\end{equation}
Equivalently, for almost every $s\ge 0$ and every $t\ge s$,
\begin{equation}
\label{eq:energy_ineq_integrated}
\frac{1}{2}\|u(t)\|_{L^2}^2
+ \nu \int_s^t \|\nabla u(\tau)\|_{L^2}^2\,d\tau
+ \int_s^t \int_\omega K(\rho(\tau))\,|u(x,\tau)|^2\,dx\,d\tau
\le \frac{1}{2}\|u(s)\|_{L^2}^2.
\end{equation}
\end{lemma}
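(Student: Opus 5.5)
The plan is to obtain the energy inequality as a limit of exact energy identities for Galerkin approximations, which is the standard route for Leray--Hopf solutions. The pointwise form \eqref{eq:energy_ineq} will then be read as the distributional (a.e.) version of the integrated form \eqref{eq:energy_ineq_integrated}. For the Galerkin step, let $P_N$ denote the $L^2$-orthogonal projection onto the span of the first $N$ eigenfunctions of the Stokes operator on $H$; on $\mathbb{T}^3$ these are divergence-free, zero-mean Fourier modes. The approximate system is
\[
\partial_t u_N + P_N\big((u_N\cdot\nabla)u_N\big) - \nu\Delta u_N = -P_N\big(\chi_\omega K(\rho(t))u_N\big).
\]
Since $K(\rho(\cdot))$ is bounded and measurable, this is a Carath\'eodory ODE with locally Lipschitz right-hand side, so it has a local solution.

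Taking the inner product with $u_N$ uses three facts. The pressure drops out by incompressibility. The convective term vanishes by skew-symmetry, $\int (u_N\cdot\nabla)u_N\cdot u_N\,dx=0$ for divergence-free fields. The damping term gives exactly $\int_\omega K(\rho(t))|u_N|^2dx$; when $\chi_\omega$ is a general nonnegative cutoff this reads $\int\chi_\omega K|u_N|^2$, with the $\omega$-integral in the statement interpreted accordingly. Since $K\ge0$, $\chi_\omega\ge0$ and $u_N$ is absolutely continuous in time, we obtain the energy identity \eqref{eq:energy_ineq_integrated} with equality for $u_N$, for all $s\le t$. In particular $\|u_N(t)\|\le\|P_Nu_0\|\le\|u_0\|$, so the solution is global.

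To pass to the limit we use the uniform bounds in $L^\infty(0,T;H)\cap L^2(0,T;V)$, together with a bound on $\partial_t u_N$ in $L^{4/3}(0,T;V')$. Aubin--Lions then gives strong convergence in $L^2(0,T;L^2)$ along a subsequence, and also pointwise convergence of $u_N(t)$ in $L^2$ for a.e.\ $t$. This identifies the limit as a weak solution; for the given $u$ we take the energy inequality as part of the definition of a Leray--Hopf solution, which is the standard convention. The terms pass to the limit as follows:
\begin{itemize}
\item the viscous term by weak lower semicontinuity of $\int_s^t\|\nabla u_N\|^2$;
\item the damping term by the strong $L^2_{t,x}$ convergence, since $\chi_\omega K$ is bounded (lower semicontinuity would also suffice);
\item $\|u(t)\|^2\le\liminf\|u_N(t)\|^2$, using weak continuity of $u$ in $H$, valid for every $t$;
\item $\|u_N(s)\|\to\|u(s)\|$ only for a.e.\ $s$, including $s=0$ since $P_Nu_0\to u_0$.
\end{itemize}
This is exactly why \eqref{eq:energy_ineq_integrated} holds for a.e.\ $s$ and all $t\ge s$.

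The main obstacle is that the time-derivative form \eqref{eq:energy_ineq} is not literally meaningful pointwise, because $t\mapsto\|u(t)\|^2$ need not be absolutely continuous. To handle this, set $E(t)=\tfrac12\|u(t)\|^2+\int_0^t\big(\nu\|\nabla u\|^2+\int_\omega K|u|^2\big)d\tau$. The integrated inequality says $E(t)\le E(s)$ for a.e.\ $s$ and all $t\ge s$, so $E$ agrees a.e.\ with a nonincreasing function. A monotone function is differentiable a.e.\ with nonpositive derivative, and the integral part is absolutely continuous with derivative equal to its integrand a.e.\ by Lebesgue differentiation. Hence $\tfrac{d}{dt}\tfrac12\|u\|^2$, taken a.e.\ (or in the sense of distributions on $(0,\infty)$), satisfies \eqref{eq:energy_ineq}. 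Conversely, integrating the distributional inequality recovers \eqref{eq:energy_ineq_integrated} at Lebesgue points, which is the sense in which the two forms are equivalent.
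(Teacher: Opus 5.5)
Your proposal is correct and follows essentially the same route as the paper: it derives an exact energy identity for the Galerkin approximations, where the convective term vanishes by skew-symmetry and the damping term contributes $\int\chi_\omega K|u_N|^2$, and then passes to the limit by the standard Leray compactness argument with lower semicontinuity. Your version is more careful than the paper's on three points it glosses over: why \eqref{eq:energy_ineq_integrated} holds only for almost every $s$, in what (distributional or a.e.) sense the differential form \eqref{eq:energy_ineq} is meant, and the fact that for an arbitrary given Leray--Hopf solution the inequality has to be part of the definition, since the Galerkin argument only produces it for the solution it constructs.
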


\begin{proof}
The estimate is first established at the Galerkin level.
Let $(w_k)_{k\ge1}\subset V$ be a divergence-free orthonormal basis of $H$ and
consider an approximate solution of the form
\[
u_m(t)=\sum_{k=1}^m g_k^{(m)}(t)w_k.
\]
Projecting \eqref{eq:NS_controlled} onto
$\mathrm{span}\{w_1,\dots,w_m\}$ yields a finite-dimensional system for the
coefficients $g_k^{(m)}$.
Testing this system with $u_m(t)$ gives
\[
\frac12\frac{d}{dt}\|u_m(t)\|_{L^2}^2
+\nu\|\nabla u_m(t)\|_{L^2}^2
+\int_\omega K(\rho(t))|u_m(x,t)|^2\,dx
=
-\int_{\mathbb{T}^3}(u_m\cdot\nabla)u_m\cdot u_m\,dx .
\]

The convection term vanishes due to incompressibility and periodicity:
\[
\int_{\mathbb{T}^3}(u_m\cdot\nabla)u_m\cdot u_m\,dx
=
\frac12\int_{\mathbb{T}^3}u_m\cdot\nabla(|u_m|^2)\,dx
=0.
\]
Therefore,
\[
\frac12\frac{d}{dt}\|u_m(t)\|_{L^2}^2
+\nu\|\nabla u_m(t)\|_{L^2}^2
+\int_\omega K(\rho(t))|u_m(x,t)|^2\,dx
=0.
\]
Integrating over $(s,t)$ yields the exact energy identity at the Galerkin level.

The standard compactness argument in the Leray construction applies here,
since the additional term
$-\chi_\omega K(\rho(t))u_m$ is linear with bounded measurable coefficient.
Passing to the limit as $m\to\infty$ yields a Leray--Hopf weak solution $u$ and
the integrated inequality \eqref{eq:energy_ineq_integrated}.
The differential form \eqref{eq:energy_ineq} follows for almost every $t>0$.
\end{proof}

As a direct consequence, the kinetic energy $\|u(t)\|_{L^2}^2$ is
non-increasing in time for any admissible gain $K(\rho(t))\ge 0$.
Lemma~\ref{lem:energy_ineq} provides the fundamental estimate used throughout
the paper.

\subsection{Exponential decay under a localized coercivity condition}

The energy inequality alone does not yield a uniform decay rate.
Exponential decay follows from an additional coercivity mechanism linking
localized dissipation to global kinetic energy.

\begin{proposition}[Localized coercivity]
\label{prop:coercivity_optimal}
Let $\chi_\omega\in L^\infty(\mathbb{T}^3)$ be nonnegative and let
$K_{\min}\ge 0$.
Define
\begin{equation}
\label{eq:alphaomega_def}
\alpha_\omega(K_{\min})
:=
\inf_{v\in V\setminus\{0\}}
\frac{
\nu \|\nabla v\|_{L^2(\mathbb{T}^3)}^2
+
K_{\min}\|\chi_\omega^{1/2} v\|_{L^2(\mathbb{T}^3)}^2
}{
\|v\|_{L^2(\mathbb{T}^3)}^2
}.
\end{equation}
Then, for all $v\in V$,
\begin{equation}
\label{eq:coercivity}
\nu \|\nabla v\|_{L^2(\mathbb{T}^3)}^2
+
K_{\min}\|\chi_\omega^{1/2} v\|_{L^2(\mathbb{T}^3)}^2
\ge
\alpha_\omega(K_{\min}) \|v\|_{L^2(\mathbb{T}^3)}^2.
\end{equation}
Moreover, $\alpha_\omega(K_{\min})>0$ if and only if a coercivity inequality of
the form \eqref{eq:coercivity} holds, and in that case
$\alpha_\omega(K_{\min})$ is the largest admissible constant.
\end{proposition}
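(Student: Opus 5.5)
The argument is essentially a direct unpacking of the definition of $\alpha_\omega(K_{\min})$ as an infimum of a Rayleigh-type quotient. Write
\[
Q(v):=\nu\|\nabla v\|_{L^2}^2+K_{\min}\|\chi_\omega^{1/2}v\|_{L^2}^2 ,
\]
which is well defined and finite for $v\in V$, since $\chi_\omega\in L^\infty$ and $\chi_\omega\ge0$. It also satisfies $0\le Q(v)$. The quotient $Q(v)/\|v\|_{L^2}^2$ is nonnegative, so the infimum $\alpha_\omega(K_{\min})$ is a well-defined number in $[0,\infty)$. It is finite because $V\setminus\{0\}$ is nonempty: any nonzero divergence-free zero-mean trigonometric polynomial lies in it.

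\textbf{Step 1: inequality \eqref{eq:coercivity}.} Take $v\in V$. If $v=0$, both sides vanish. If $v\ne0$, the definition of the infimum gives $Q(v)/\|v\|_{L^2}^2\ge\alpha_\omega(K_{\min})$. Multiplying by $\|v\|_{L^2}^2>0$ yields \eqref{eq:coercivity}.

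\textbf{Step 2: optimality.} Suppose a constant $c$ satisfies $Q(v)\ge c\|v\|_{L^2}^2$ for all $v\in V$. Dividing by $\|v\|_{L^2}^2$ for $v\neq 0$ shows that $c$ is a lower bound of the set of quotients. Hence $c\le\alpha_\omega(K_{\min})$, because the infimum is the greatest lower bound. Combined with Step 1, $\alpha_\omega(K_{\min})$ is admissible and is the largest admissible constant.

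\textbf{Step 3: the equivalence.} Interpret ``a coercivity inequality of the form \eqref{eq:coercivity}'' as the existence of some $c>0$ with $Q(v)\ge c\|v\|_{L^2}^2$ on $V$.
\begin{itemize}
\item If $\alpha_\omega(K_{\min})>0$, take $c=\alpha_\omega(K_{\min})$; Step 1 gives the inequality.
\item Conversely, if such a $c>0$ exists, Step 2 gives $\alpha_\omega(K_{\min})\ge c>0$.
\end{itemize}

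It is worth adding a remark that, on $\mathbb{T}^3$ with the zero-mean condition built into $H$, the Poincaré inequality $\|\nabla v\|_{L^2}^2\ge 4\pi^2\|v\|_{L^2}^2$ already gives $\alpha_\omega(K_{\min})\ge 4\pi^2\nu>0$. Positivity therefore always holds here, and the content of the proposition lies in the optimality characterization, not in positivity.

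The only step needing any care is fixing the precise meaning of the equivalence, namely that the inequality is required with a strictly positive constant, since with $c=0$ the inequality holds trivially. No step involves a genuine analytic obstacle; in particular, no attainment of the infimum, and hence no compactness argument, is needed.
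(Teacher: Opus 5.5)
Your proof is correct and follows the paper's argument essentially verbatim: inequality \eqref{eq:coercivity} is the definition of the infimum of the Rayleigh quotient, optimality is the greatest-lower-bound property, and the equivalence combines the two with a strictly positive constant. Your side remark that the Poincar\'e inequality already forces $\alpha_\omega(K_{\min})\ge 4\pi^2\nu>0$ on the unit torus is also correct, and it matches the paper's own later remark that viscous dissipation alone yields decay on $\mathbb{T}^3$.
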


\begin{proof}
For $v\in V\setminus\{0\}$, define the Rayleigh quotient
\[
R(v):=
\frac{
\nu \|\nabla v\|_{L^2(\mathbb{T}^3)}^2
+
K_{\min}\|\chi_\omega^{1/2} v\|_{L^2(\mathbb{T}^3)}^2
}{
\|v\|_{L^2(\mathbb{T}^3)}^2
}.
\]
By definition,
\[
\alpha_\omega(K_{\min})=\inf_{v\in V\setminus\{0\}} R(v).
\]
Hence, for every $v\in V\setminus\{0\}$,
\[
\alpha_\omega(K_{\min})\le R(v),
\]
which is exactly \eqref{eq:coercivity} after multiplication by
$\|v\|_{L^2(\mathbb{T}^3)}^2$.
The case $v=0$ is trivial.

Suppose now that there exists $\alpha>0$ such that
\[
\nu \|\nabla v\|_{L^2}^2
+
K_{\min}\|\chi_\omega^{1/2} v\|_{L^2}^2
\ge
\alpha \|v\|_{L^2}^2,
\qquad \forall v\in V.
\]
Then $R(v)\ge \alpha$ for every $v\neq0$, and taking the infimum over
$V\setminus\{0\}$ yields
\[
\alpha_\omega(K_{\min})\ge \alpha>0.
\]

Conversely, if $\alpha_\omega(K_{\min})>0$, then \eqref{eq:coercivity} holds
with $\alpha=\alpha_\omega(K_{\min})$ by the first part of the proof.

Finally, let $\beta$ be any constant such that
\[
\nu \|\nabla v\|_{L^2}^2
+
K_{\min}\|\chi_\omega^{1/2} v\|_{L^2}^2
\ge
\beta \|v\|_{L^2}^2,
\qquad \forall v\in V.
\]
Then $R(v)\ge \beta$ for all $v\neq0$, hence
$\alpha_\omega(K_{\min})\ge \beta$.
Therefore $\alpha_\omega(K_{\min})$ is the largest admissible constant.
\end{proof}

\begin{remark}[Interpretation]
The quantity $\alpha_\omega(K_{\min})$ corresponds to the spectral gap of the
quadratic form associated with the operator
$-\nu\Delta + K_{\min}\chi_\omega$ on divergence-free, zero-mean fields.
Its positivity reflects the ability of localized dissipation to control the
global kinetic energy.
\end{remark}

\begin{corollary}[A sufficient non-localized condition]
\label{cor:coercivity_sufficient}
Assume that $\chi_\omega(x)\ge \chi_{\min}>0$ almost everywhere on
$\mathbb{T}^3$.
Then
\[
\alpha_\omega(K_{\min})\ge \nu\lambda_1 + K_{\min}\chi_{\min}>0,
\]
where $\lambda_1>0$ denotes the Poincar\'e constant on $H$.
\end{corollary}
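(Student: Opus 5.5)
The plan is to bound the Rayleigh quotient $R(v)$ from the proof of Proposition~\ref{prop:coercivity_optimal} from below, term by term, for an arbitrary $v\in V\setminus\{0\}$, and then take the infimum.

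First, the viscous term. On $V$, which consists of divergence-free, zero-mean fields on $\mathbb{T}^3$, the Poincar\'e inequality gives
\[
\|\nabla v\|_{L^2}^2\ge \lambda_1\|v\|_{L^2}^2 .
\]
This can be verified with Fourier series. Writing $v=\sum_{k\neq0}\hat v_k e^{2\pi i k\cdot x}$, the zero-mode vanishes, so $\|\nabla v\|^2=\sum_{k\ne0}4\pi^2|k|^2|\hat v_k|^2\ge 4\pi^2\|v\|^2$. Hence one may take $\lambda_1=4\pi^2$, or whatever value the paper designates as the Poincar\'e constant.

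Second, the damping term. Since $\chi_\omega\ge\chi_{\min}$ almost everywhere and $K_{\min}\ge0$, we have pointwise a.e.
\[
K_{\min}\chi_\omega(x)|v(x)|^2\ge K_{\min}\chi_{\min}|v(x)|^2 .
\]
Integrating over $\mathbb{T}^3$ gives $K_{\min}\|\chi_\omega^{1/2}v\|_{L^2}^2\ge K_{\min}\chi_{\min}\|v\|_{L^2}^2$.

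Adding the two bounds yields $R(v)\ge \nu\lambda_1+K_{\min}\chi_{\min}$ for every $v\neq0$. Taking the infimum over $V\setminus\{0\}$ gives the claimed lower bound on $\alpha_\omega(K_{\min})$. Strict positivity follows because $\nu\lambda_1>0$ and $K_{\min}\chi_{\min}\ge0$. In particular, positivity holds even when $K_{\min}=0$.

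There is no real obstacle here. The only point requiring care is that the Poincar\'e inequality relies on the zero-mean condition built into $H$, which is exactly why that condition is imposed in the definition of $H$. It is also worth remarking that the bound merely adds the uniform damping floor to the viscous spectral gap, so genuine localization ($\chi_\omega$ vanishing on a set of positive measure) is not covered by this corollary.
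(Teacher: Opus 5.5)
Your proposal is correct and follows the paper's argument: bound $\nu\|\nabla v\|_{L^2}^2$ from below with the Poincar\'e inequality on $H$, bound $K_{\min}\|\chi_\omega^{1/2}v\|_{L^2}^2$ from below with the pointwise bound $\chi_\omega\ge\chi_{\min}$, add the two bounds, and take the infimum in the definition of $\alpha_\omega(K_{\min})$. Your Fourier check that one may take $\lambda_1=4\pi^2$ on $(\mathbb{R}/\mathbb{Z})^3$, and your remark that positivity holds even when $K_{\min}=0$, are details the paper leaves implicit.
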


\begin{proof}
For every $v\in V$, the lower bound on $\chi_\omega$ gives
\[
\|\chi_\omega^{1/2}v\|_{L^2}^2
=
\int_{\mathbb{T}^3}\chi_\omega |v|^2\,dx
\ge
\chi_{\min}\|v\|_{L^2}^2.
\]
Moreover, by the Poincar\'e inequality on $H$,
\[
\|\nabla v\|_{L^2}^2\ge \lambda_1\|v\|_{L^2}^2.
\]
Therefore
\[
\nu \|\nabla v\|_{L^2}^2
+
K_{\min}\|\chi_\omega^{1/2} v\|_{L^2}^2
\ge
(\nu\lambda_1+K_{\min}\chi_{\min})\|v\|_{L^2}^2.
\]
Taking the infimum over $v\in V\setminus\{0\}$ in
\eqref{eq:alphaomega_def} yields the desired bound.
\end{proof}

\begin{remark}
The condition $\chi_\omega\ge \chi_{\min}>0$ corresponds to damping distributed
throughout the whole domain and is therefore restrictive in genuinely localized
settings.
It is included only as an analytical benchmark.
\end{remark}

A lower bound is imposed on the time-varying gain:
\begin{equation}
\label{eq:K_lower_bound}
K(\rho(t)) \ge K_{\min}
\quad \text{for almost every } t\ge 0.
\end{equation}

This lower bound ensures that the coercivity mechanism quantified by
$\alpha_\omega(K_{\min})$ remains uniformly available along the evolution.

\begin{theorem}[Exponential energy decay]
\label{thm:exp_decay}
Assume that $\alpha_\omega(K_{\min})>0$ and that
\eqref{eq:K_lower_bound} holds.
Then any Leray--Hopf weak solution of \eqref{eq:NS_controlled} satisfies
\begin{equation}
\label{eq:exp_decay}
\|u(t)\|_{L^2} \le e^{-\alpha_\omega(K_{\min}) t}\|u_0\|_{L^2},
\qquad \forall t\ge 0.
\end{equation}
\end{theorem}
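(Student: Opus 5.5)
The plan is to combine the energy inequality of Lemma~\ref{lem:energy_ineq} with the coercivity bound \eqref{eq:coercivity} of Proposition~\ref{prop:coercivity_optimal}, and then close with a Gr\"onwall-type argument. Since Leray--Hopf solutions only satisfy the energy inequality in integrated form, and only for almost every starting time $s$, we work with \eqref{eq:energy_ineq_integrated} and avoid differentiating $\|u(t)\|_{L^2}^2$ pointwise.

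\emph{Step 1 (dissipation bound).} For a.e.\ $\tau$ we have $u(\tau)\in V$. By \eqref{eq:K_lower_bound} and the nonnegativity of $\chi_\omega$,
\[
\int_\omega K(\rho(\tau))|u|^2\,dx \;\ge\; K_{\min}\|\chi_\omega^{1/2}u(\tau)\|_{L^2}^2 .
\]
Here the damping integral is read as $\int \chi_\omega K |u|^2$, consistently with \eqref{eq:NS_controlled}. Applying \eqref{eq:coercivity} then gives
\[
\nu\|\nabla u(\tau)\|_{L^2}^2+\int_\omega K(\rho(\tau))|u|^2\,dx \;\ge\; \alpha\|u(\tau)\|_{L^2}^2,
\qquad \alpha:=\alpha_\omega(K_{\min}).
\]

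\emph{Step 2 (integral inequality).} Set $E(t)=\|u(t)\|_{L^2}^2$. Inserting Step~1 into \eqref{eq:energy_ineq_integrated} yields
\[
E(t)+2\alpha\int_s^t E(\tau)\,d\tau \;\le\; E(s)
\]
for a.e.\ $s\ge0$, including $s=0$ since the Leray--Hopf energy inequality holds from the initial time, and for every $t\ge s$.

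\emph{Step 3 (Gr\"onwall with a.e.\ starting times).} Here $E$ is nonincreasing, bounded, and measurable. Set $F(t)=e^{2\alpha t}E(t)$. The goal is to show that $F$ is nonincreasing, which gives $E(t)\le e^{-2\alpha t}E(0)$ and hence \eqref{eq:exp_decay} after taking square roots. One route is the standard lemma on functions satisfying $E(t)+c\int_s^tE\le E(s)$ for a.e.\ $s$ and all $t\ge s$: for admissible $s$ and small $h>0$, monotonicity gives $E(s+h)(1+2\alpha h)\le E(s)$. Iterating along a partition of $[0,t]$ with admissible nodes, which can be chosen arbitrarily fine since the admissible set has full measure, gives
\[
E(t)\le(1+2\alpha h)^{-n}E(0),\qquad nh\approx t.
\]
Letting $h\to0$ produces $e^{-2\alpha t}E(0)$. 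The endpoint $t$ is handled by using the monotonicity of $E$ to compare with a nearby admissible node.

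The main obstacle is this last step: the energy inequality is available only for a.e.\ $s$, so the differential form \eqref{eq:energy_ineq} cannot be used naively. The partition/iteration argument above is what I would write out carefully, using $0$ as an admissible node and monotonicity of $E$ to control endpoints.
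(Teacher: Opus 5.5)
Your proposal is correct and follows the same route as the paper: you insert the gain lower bound and the coercivity inequality into the integrated energy inequality to obtain $E(t)+2\alpha_\omega(K_{\min})\int_s^t E(\tau)\,d\tau\le E(s)$, and then close with an integral Gr\"onwall argument. The differences are confined to the last step. You write out the Gr\"onwall lemma via a partition with admissible nodes and take $s=0$ directly as an admissible time. The monotonicity you invoke is only needed, and only available, in the form $E(t)\le E(s)$ for admissible $s\le t$, which is exactly what your iteration uses. The paper instead cites a standard integral Gr\"onwall lemma for a.e.\ $s$ and then lets $s\to0^+$, using the strong right-continuity of Leray--Hopf solutions at $t=0$.
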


\begin{proof}
Let $u$ be a Leray--Hopf weak solution.
By Lemma~\ref{lem:energy_ineq}, for almost every $s\ge0$ and every $t\ge s$,
\[
\frac12\|u(t)\|_{L^2}^2
+\nu\int_s^t\|\nabla u(\tau)\|_{L^2}^2\,d\tau
+\int_s^t\int_\omega K(\rho(\tau))|u(x,\tau)|^2\,dx\,d\tau
\le
\frac12\|u(s)\|_{L^2}^2.
\]
Since
\[
\int_\omega K(\rho(\tau))|u(x,\tau)|^2\,dx
=
K(\rho(\tau))\|\chi_\omega^{1/2}u(\tau)\|_{L^2}^2,
\]
the lower bound \eqref{eq:K_lower_bound} gives, for almost every $\tau$,
\[
\nu\|\nabla u(\tau)\|_{L^2}^2
+K(\rho(\tau))\|\chi_\omega^{1/2}u(\tau)\|_{L^2}^2
\ge
\nu\|\nabla u(\tau)\|_{L^2}^2
+K_{\min}\|\chi_\omega^{1/2}u(\tau)\|_{L^2}^2.
\]
Applying \eqref{eq:coercivity} with $v=u(\tau)$ yields
\[
\nu\|\nabla u(\tau)\|_{L^2}^2
+K(\rho(\tau))\|\chi_\omega^{1/2}u(\tau)\|_{L^2}^2
\ge
\alpha_\omega(K_{\min})\|u(\tau)\|_{L^2}^2
\]
for almost every $\tau$.
Substituting into the integrated energy inequality gives
\[
\frac12\|u(t)\|_{L^2}^2
+\alpha_\omega(K_{\min})\int_s^t \|u(\tau)\|_{L^2}^2\,d\tau
\le
\frac12\|u(s)\|_{L^2}^2.
\]
Define $E(t):=\|u(t)\|_{L^2}^2$.
Then, for almost every $s\ge0$ and every $t\ge s$,
\[
E(t)+2\alpha_\omega(K_{\min})\int_s^t E(\tau)\,d\tau \le E(s).
\]
A standard integral form of Gr\"onwall's lemma implies
\[
E(t)\le e^{-2\alpha_\omega(K_{\min})(t-s)}E(s),
\qquad \forall t\ge s,
\]
for almost every $s\ge0$.
Letting $s\to0^+$ and using the standard strong convergence
$u(s)\to u_0$ in $L^2$ for Leray--Hopf weak solutions, one obtains
\[
E(t)\le e^{-2\alpha_\omega(K_{\min})t}E(0)
=
e^{-2\alpha_\omega(K_{\min})t}\|u_0\|_{L^2}^2,
\qquad \forall t\ge0,
\]
which is equivalent to \eqref{eq:exp_decay}.
\end{proof}

\begin{remark}[Role of localized dissipation]
On the periodic domain $\mathbb{T}^3$, exponential decay already follows from
viscous dissipation alone through the Poincar\'e inequality.
The role of the localized damping term is therefore not to stabilize an unstable
system, but to introduce a coercivity mechanism linking spatially localized
dissipation to global energy decay.
In particular, it makes it possible to quantify how the geometry of the damping
region influences the decay rate through the constant
$\alpha_\omega(K_{\min})$.
\end{remark}

\subsection{Discussion}

\begin{itemize}
\item The analysis is carried out at the level of Leray--Hopf weak solutions.
\item Time-varying gains are allowed, provided \eqref{eq:K_lower_bound} holds
when exponential decay is required.
\item The skew-symmetric structure of the convection term isolates the role of
dissipation in the energy balance.
\end{itemize}

\begin{remark}[Localized damping structure]
\label{rem:physical_damping}
The term $-\chi_\omega K(\rho(t))u$ is chosen in a proportional and dissipative
form.
From a physical standpoint, it models resistive mechanisms that locally extract
momentum.
From a mathematical viewpoint, it preserves the skew-symmetric energy balance of
the convective nonlinearity and contributes an additional localized dissipation
term in the energy identity.
\end{remark}

%
%
%
%
%

\subsection{On the nature and verification of the coercivity condition}

The positivity of the constant $\alpha_\omega(K_{\min})$ in
\eqref{eq:alphaomega_def} is a structural property of the pair
$(\chi_\omega,K_{\min})$.
It reflects the ability of the combined viscous and localized dissipation
mechanisms to control the global $L^2$ energy.

In general, this property depends on global spectral features of the operator
$-\nu\Delta + K_{\min}\chi_\omega$ restricted to divergence-free, zero-mean
fields and cannot be verified by purely local arguments.
It is closely related to observability and spectral properties for parabolic
operators; see Section~\ref{sec:continuous_observability}.

Two complementary viewpoints can be distinguished:

\begin{itemize}
\item[(i)] \emph{Analytical verification.}
Under suitable geometric assumptions on the damping region $\omega$,
the coercivity condition is expected to be related to observability or
spectral inequalities for parabolic equations; see, for instance,
\cite{LebeauRobbiano1995,apraiz2014observability}.
However, establishing such properties in the present setting remains a
nontrivial issue and is not addressed here.

\item[(ii)] \emph{Finite-dimensional verification.}
In the absence of explicit analytical criteria, the coercivity condition can be
assessed on divergence-free reduced spaces.
This leads to the discrete spectral procedure introduced in
Section~\ref{sec:discrete_coercivity}, which provides a computable proxy for the
continuous coercivity mechanism at a prescribed resolution.
\end{itemize}

In the present work, the positivity of $\alpha_\omega(K_{\min})$ is treated as
a structural assumption at the continuous level.
The purpose of the analysis is to characterize its consequences at the level of
energy estimates rather than to derive explicit sufficient conditions.

The next section develops an operator-theoretic formulation of the dissipative
part of the dynamics.

\section{Operator Estimates for Time-Dependent Localized Dissipation}
\label{sec:operator_kato}

This section recasts the dissipative part of the Navier--Stokes dynamics with
localized dissipation into an operator-theoretic setting.
The time dependence enters only through the localized damping intensity
$K(\rho(t))$, while the underlying Stokes dynamics remains autonomous.
The purpose is to make this dissipative structure explicit and to analyze the
effect of time-dependent damping through a nonautonomous evolution setting.

\subsection{Stokes operator and Leray projector}

Let $P:L^2(\mathbb{T}^3;\mathbb{R}^3)\to H$ denote the Leray projector onto
divergence-free, zero-mean vector fields.
The periodic Stokes operator is defined by
\[
A_0 := \nu P\Delta,
\qquad
D(A_0):= H^2(\mathbb{T}^3;\mathbb{R}^3)\cap H.
\]
Since velocity fields in $H$ are divergence-free and have zero spatial mean, the
Laplacian preserves $H$ on the torus.
Thus, on $D(A_0)$, one may equivalently write
\[
A_0 v = \nu \Delta v,
\qquad v\in D(A_0).
\]
We nevertheless keep the notation $A_0=\nu P\Delta$ in order to remain consistent
with the standard Stokes operator formalism.

Since the Laplacian has no nontrivial kernel on $H$, the operator $A_0$ is
self-adjoint and strictly negative on $H$, and it generates an analytic
contraction semigroup $(e^{tA_0})_{t\ge0}$.

\subsection{Localized damping as a bounded operator}

Let $\chi_\omega\in L^\infty(\mathbb{T}^3)$ be a nonnegative localization
function.
For any scalar gain $k\ge0$, define the bounded linear operator $D_k:H\to H$ by
\[
D_k v := P(\chi_\omega\,k\,v), \qquad v\in H.
\]
Since $P$ is an orthogonal projector on $L^2$ and $\chi_\omega\in L^\infty$, one
has
\[
\|D_k\|_{\mathcal L(H)} \le \|\chi_\omega\|_{L^\infty}\,k.
\]
Moreover, for any $v\in H$,
\begin{equation}
\label{eq:Dk_positive}
\langle D_k v, v\rangle_H
=
\langle P(\chi_\omega kv),v\rangle_H
=
\langle \chi_\omega kv,v\rangle_{L^2}
=
k\int_{\mathbb{T}^3}\chi_\omega |v|^2\,dx \ge 0,
\end{equation}
which shows that $-D_k$ is dissipative on $H$.

\subsection{Frozen generator and dissipativity}

Let $\mathcal P$ be an admissible parameter set and
$K:\mathcal P\to\mathbb{R}_+$ a bounded nonnegative scalar map.
For each fixed $\rho\in\mathcal P$, define
\[
D_{K(\rho)} := D_k\big|_{k=K(\rho)},\qquad
D_{K(\rho)}v=P(\chi_\omega K(\rho)v),
\]
and introduce the frozen operator
\begin{equation}
\label{eq:Afrozen}
A(\rho):=A_0-D_{K(\rho)},
\qquad
D(A(\rho)):=D(A_0).
\end{equation}

\begin{proposition}[Generation and uniform dissipativity]
\label{prop:gen_dissip}
For every $\rho\in\mathcal P$, the operator $A(\rho)$ generates a
$\mathcal C_0$-semigroup $(e^{tA(\rho)})_{t\ge0}$ on $H$ and satisfies
\[
\|e^{tA(\rho)}\|_{\mathcal L(H)}\le1,\qquad \forall t\ge0.
\]
If there exists $\alpha_\omega>0$ such that
\begin{equation}
\label{eq:coercivity_operator}
-\langle A(\rho)v,v\rangle_H \ge \alpha_\omega\|v\|_H^2,
\qquad \forall v\in D(A_0),
\end{equation}
uniformly for $\rho\in\mathcal P$, then
\[
\|e^{tA(\rho)}\|_{\mathcal L(H)}\le e^{-\alpha_\omega t},
\qquad \forall t\ge0,
\]
uniformly in $\rho$.
\end{proposition}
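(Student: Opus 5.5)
The plan is to treat $A(\rho)$ as a bounded perturbation of the Stokes operator and to combine the Lumer--Phillips theorem with an energy estimate along the semigroup orbit. Fix $\rho\in\mathcal P$ and set $k=K(\rho)\ge0$.

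\textbf{Step 1: generation.} Since $A_0$ is self-adjoint and strictly negative on $H$, it generates a contraction semigroup. The operator $D_k$ is bounded on $H$ with $\|D_k\|_{\mathcal L(H)}\le k\|\chi_\omega\|_{L^\infty}$. By the bounded perturbation theorem (Pazy, Thm.~3.1.1), $A(\rho)=A_0-D_k$ on $D(A_0)$ generates a $\mathcal C_0$-semigroup on $H$.

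\textbf{Step 2: contraction.} I will use Lumer--Phillips. Dissipativity comes directly from the definitions: for $v\in D(A_0)$,
\[
\langle A(\rho)v,v\rangle_H=-\nu\|\nabla v\|_{L^2}^2-\langle D_kv,v\rangle_H\le0,
\]
where the first term uses integration by parts on $\mathbb{T}^3$ and the second uses \eqref{eq:Dk_positive}. For the range condition I need $\lambda-A(\rho)$ surjective for some $\lambda>0$. Write $\lambda-A(\rho)=(\lambda-A_0)\bigl(I+(\lambda-A_0)^{-1}D_k\bigr)$. The resolvent bound $\|(\lambda-A_0)^{-1}\|\le1/\lambda$ shows that the bracket is invertible by a Neumann series once $\lambda>k\|\chi_\omega\|_{L^\infty}$. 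Alternatively, one can invoke the fact that a bounded dissipative perturbation of an m-dissipative operator is m-dissipative. Either way, Lumer--Phillips gives $\|e^{tA(\rho)}\|\le1$.

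\textbf{Step 3: exponential rate.} For $v_0\in D(A_0)$, set $v(t)=e^{tA(\rho)}v_0$. This is a classical solution with $v(t)\in D(A_0)$ and $v\in C^1([0,\infty);H)$. Hence
\[
\tfrac{d}{dt}\|v(t)\|_H^2=2\langle A(\rho)v(t),v(t)\rangle_H\le-2\alpha_\omega\|v(t)\|_H^2
\]
by \eqref{eq:coercivity_operator}. Gr\"onwall then gives $\|v(t)\|_H\le e^{-\alpha_\omega t}\|v_0\|_H$. Since $D(A_0)$ is dense in $H$ and $e^{tA(\rho)}$ is bounded, the estimate extends to all $v_0\in H$. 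The constant $\alpha_\omega$ does not depend on $\rho$, so the bound is uniform in $\rho$.

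\textbf{Expected obstacle.} The only step needing care is the range condition in Step 2. Dissipativity alone does not give a contraction semigroup, so I will rely on the Neumann-series factorization above, which works because $D_k$ is bounded. Everything else is a direct energy computation. It relies on the regularity of orbits starting in $D(A(\rho))=D(A_0)$, which holds because the domain is unchanged under the bounded perturbation.
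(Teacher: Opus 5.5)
Your proof is correct, and your Steps~1 and~3 coincide with the paper's argument: generation by the bounded perturbation theorem, then the energy identity along classical solutions starting in $D(A_0)$, Gr\"onwall, and density. The two proofs differ only in how contraction is obtained.

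The paper does not invoke Lumer--Phillips. It runs the computation of your Step~3 with the constant $0$ in place of $\alpha_\omega$, i.e.\ $\frac{d}{dt}\|u(t)\|_H^2=2\langle A(\rho)u(t),u(t)\rangle_H\le0$ for $u_0\in D(A_0)$, and then extends by density.

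So the ``expected obstacle'' you single out is not really one. Once Step~1 shows that $A(\rho)$ is a generator, dissipativity on $D(A(\rho))=D(A_0)$ already gives contractivity. The range condition is also automatic: the perturbation theorem yields $\|e^{tA(\rho)}\|_{\mathcal L(H)}\le e^{k\|\chi_\omega\|_{L^\infty}t}$, so every $\lambda>k\|\chi_\omega\|_{L^\infty}$ lies in the resolvent set of $A(\rho)$. That is exactly what your Neumann series re-derives. Conversely, your Lumer--Phillips argument on its own already proves that $A(\rho)$ generates a contraction semigroup, which makes Step~1 superfluous in your write-up.

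Your route gets generation and contraction from a single m-dissipativity statement, without using regularity of orbits. The paper's route is more economical: contraction and exponential decay become one energy argument, namely the cases $0$ and $\alpha_\omega$ of the same differential inequality.
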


\begin{proof}
Fix $\rho\in\mathcal P$.
Since $A_0$ generates an analytic contraction semigroup on $H$ and
$-D_{K(\rho)}\in\mathcal L(H)$ is bounded, the bounded perturbation theorem
implies that $A(\rho)=A_0-D_{K(\rho)}$ generates a $\mathcal C_0$-semigroup on
$H$.

For $v\in D(A_0)$,
\[
\langle A_0v,v\rangle_H
=
\nu\langle P\Delta v,v\rangle_H
=
\nu\langle \Delta v,v\rangle_{L^2}
=
-\nu\|\nabla v\|_{L^2}^2
\le 0,
\]
where we used the orthogonality of $P$ and periodic integration by parts.
Combining this with \eqref{eq:Dk_positive} yields
\[
\langle A(\rho)v,v\rangle_H
=
\langle A_0v,v\rangle_H-\langle D_{K(\rho)}v,v\rangle_H
\le 0.
\]
Thus $A(\rho)$ is dissipative on $D(A_0)$.

To prove contractivity, let $u_0\in D(A_0)$ and set
$u(t)=e^{tA(\rho)}u_0$.
Then $u$ is a strong solution of $\dot u=A(\rho)u$, and therefore
\[
\frac12\frac{d}{dt}\|u(t)\|_H^2
=
\langle A(\rho)u(t),u(t)\rangle_H
\le 0.
\]
Hence $\|u(t)\|_H\le \|u_0\|_H$ for all $t\ge0$.
By density of $D(A_0)$ in $H$ and strong continuity of the semigroup, this
estimate extends to all $u_0\in H$, which gives
\[
\|e^{tA(\rho)}\|_{\mathcal L(H)}\le 1.
\]

Assume now that \eqref{eq:coercivity_operator} holds uniformly on $\mathcal P$.
For $u_0\in D(A_0)$, the corresponding strong solution satisfies
\[
\frac12\frac{d}{dt}\|u(t)\|_H^2
=
\langle A(\rho)u(t),u(t)\rangle_H
\le -\alpha_\omega\|u(t)\|_H^2.
\]
Gr\"onwall's inequality yields
\[
\|u(t)\|_H\le e^{-\alpha_\omega t}\|u_0\|_H.
\]
Again, density and strong continuity extend the estimate to all $u_0\in H$.
\end{proof}

\begin{remark}[Origin of the nonautonomous structure]
The nonautonomous character of the linear problem comes exclusively from the
time dependence of the damping coefficient $K(\rho(t))$.
The underlying Stokes dynamics remains autonomous.
The operator formulation is introduced in order to quantify how this
time-dependent localized dissipation affects the associated energy decay
estimates.
\end{remark}

\subsection{Nonautonomous dynamics and evolution families}

Let $\rho:\mathbb R_+\to\mathcal P$ be a measurable scheduling signal and
consider the nonautonomous linear system
\begin{equation}
\label{eq:linear_nonauto}
\dot u(t)=A(\rho(t))u(t).
\end{equation}
Here $u(t)$ denotes an abstract state in $H$ and should not be confused with the
velocity field $u(\cdot,t)$ in the PDE formulation.
Solutions are described by an evolution family $(U(t,s))_{t\ge s\ge0}$.

\begin{assumption}[Regularity of the scheduling]
\label{ass:rho_reg}
For every $T>0$, the map $t\mapsto K(\rho(t))$ belongs to
$W^{1,\infty}(0,T)$ and satisfies
\[
0\le K(\rho(t))\le K_{\max}
\qquad \text{for almost every } t\in(0,T).
\]
This assumption is used only to ensure well-posedness of the nonautonomous
linear problem.
\end{assumption}

\begin{proposition}[Evolution family and decay]
\label{prop:evol_family}
Under Assumption~\ref{ass:rho_reg}, system \eqref{eq:linear_nonauto} admits a
unique evolution family $(U(t,s))_{t\ge s\ge0}$ on $H$.
Moreover,
\[
\|U(t,s)\|_{\mathcal L(H)}\le1,\qquad \forall t\ge s\ge0.
\]
If \eqref{eq:coercivity_operator} holds uniformly on $\mathcal P$, then
\[
\|U(t,s)\|_{\mathcal L(H)}\le e^{-\alpha_\omega(t-s)},
\qquad \forall t\ge s\ge0.
\]
\end{proposition}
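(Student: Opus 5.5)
The plan is to build the evolution family from Kato's theory for hyperbolic/parabolic evolution equations with time-independent domain, or more simply by treating the time-dependent part as a bounded perturbation of the autonomous analytic semigroup $e^{tA_0}$.

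\textbf{Construction.} Write $A(\rho(t))=A_0-B(t)$ with $B(t):=D_{K(\rho(t))}=K(\rho(t))\,P(\chi_\omega\,\cdot)$. By Assumption~\ref{ass:rho_reg}, $t\mapsto B(t)\in\mathcal L(H)$ is bounded by $K_{\max}\|\chi_\omega\|_{L^\infty}$ and Lipschitz on every $[0,T]$, since $\|B(t)-B(t')\|\le \|\chi_\omega\|_{L^\infty}|K(\rho(t))-K(\rho(t'))|$.

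\begin{itemize}
\item \emph{Mild solutions.} Define $U(t,s)$ as the unique fixed point of the Duhamel equation
\[
U(t,s)x=e^{(t-s)A_0}x-\int_s^t e^{(t-\tau)A_0}B(\tau)U(\tau,s)x\,d\tau .
\]
The Dyson--Phillips series converges in $\mathcal L(H)$, uniformly on compact sets, because $B$ is bounded and strongly measurable. This gives existence, uniqueness, the cocycle property $U(t,r)U(r,s)=U(t,s)$ and strong continuity (Pazy, Ch.~5, bounded perturbations).
\item \emph{Classical solutions.} The Lipschitz (hence $W^{1,\infty}$) regularity of $B$, together with the common domain $D(A(\rho))=D(A_0)$ and the stability of the family $\{A(\rho(t))\}$ (each generator is a contraction generator by Proposition~\ref{prop:gen_dissip}), places us in Kato's framework. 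It follows that for $x\in D(A_0)$, $t\mapsto U(t,s)x$ is in $C^1([s,\infty);H)$, takes values in $D(A_0)$, and solves \eqref{eq:linear_nonauto} classically. Uniqueness among classical solutions comes from the energy identity below applied to a difference of solutions.
\end{itemize}

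\textbf{Estimates.} For $x\in D(A_0)$ and $u(t)=U(t,s)x$,
\[
\tfrac12\tfrac{d}{dt}\|u(t)\|_H^2=\langle A(\rho(t))u(t),u(t)\rangle_H .
\]
As in the proof of Proposition~\ref{prop:gen_dissip}, together with \eqref{eq:Dk_positive}, this right-hand side equals
\[
-\nu\|\nabla u\|^2-K(\rho(t))\|\chi_\omega^{1/2}u\|^2\le 0 .
\]
Integrating gives $\|u(t)\|\le\|x\|$. Under \eqref{eq:coercivity_operator}, applied pointwise in $t$ with $\rho=\rho(t)\in\mathcal P$ (it holds uniformly in $\rho$, so this is legitimate for a.e.\ $t$), the right-hand side is instead bounded by $-\alpha_\omega\|u(t)\|^2$. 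Gr\"onwall then gives $\|u(t)\|\le e^{-\alpha_\omega(t-s)}\|x\|$. Density of $D(A_0)$ in $H$ and boundedness of $U(t,s)$ extend both bounds to all of $H$.

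\textbf{Main obstacle.} The delicate point is justifying the differentiation of $\|u(t)\|^2$, which requires that $U(t,s)$ preserve $D(A_0)$ and produce $C^1$ solutions. This is exactly where the $W^{1,\infty}$ hypothesis enters. I would first try the direct route: show that $w=A_0u$ satisfies a Duhamel equation of the same type, using the analyticity of $e^{tA_0}$ and an integration by parts in $\tau$ that moves the derivative onto $B(\tau)$. If that becomes technical, I would fall back on citing Kato's theorem \cite{Kato1970,Pazy1983} (Pazy, Thm.~5.4.8).

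An alternative that avoids regularity altogether is to differentiate the mild formulation via Yosida approximations of $A_0$. For the bounded approximations, $\frac{d}{dt}\|u_n\|^2$ is legitimate, and the dissipativity and coercivity inequalities pass to the limit. The coercivity step, however, would need care, since \eqref{eq:coercivity_operator} is stated only on $D(A_0)$.
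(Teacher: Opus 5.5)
Your proposal is correct and follows the paper's own proof. Both write $A(\rho(t))$ as a bounded, $W^{1,\infty}$-in-time perturbation of $A_0$ with fixed domain $D(A_0)$ and obtain the evolution family from Kato-type theory; both then differentiate $\|U(t,s)x\|_H^2$ for $x\in D(A_0)$, apply dissipativity or \eqref{eq:coercivity_operator} with Gr\"onwall, and extend to $H$ by density.

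Your extra discussion of why $U(t,s)x$ is a strong solution covers a step the paper simply delegates to ``Kato--Tanabe theory''. One caution: the constant-domain theorem you cite from Pazy assumes $t\mapsto A(t)v$ is continuously differentiable, whereas Assumption~\ref{ass:rho_reg} only gives $W^{1,\infty}$. So your direct integration-by-parts route, or a Lipschitz version of Kato's theorem, is the justification that actually matches the hypothesis.
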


\begin{proof}
Set
\[
A(t):=A(\rho(t))=A_0+B(t),
\qquad
B(t):=-D_{K(\rho(t))}.
\]
Since the map $k\mapsto D_k$ is linear and bounded from $\mathbb R_+$ to
$\mathcal L(H)$, Assumption~\ref{ass:rho_reg} implies that
$t\mapsto B(t)\in W^{1,\infty}(0,T;\mathcal L(H))$ for every $T>0$.
In particular, $B(\cdot)$ is strongly measurable and essentially bounded on
every bounded time interval, while the domain remains constant:
\[
D(A(t))=D(A_0),\qquad \forall t\ge0.
\]

Standard results on nonautonomous evolution equations with bounded
time-dependent perturbations of a generator with constant domain
(Kato--Tanabe theory) imply that \eqref{eq:linear_nonauto} admits a unique
evolution family $(U(t,s))_{t\ge s\ge0}$ on $H$.

Let now $u_s\in D(A_0)$ and define $u(t):=U(t,s)u_s$.
Then $u$ is a strong solution on every bounded interval and satisfies
\[
\frac12\frac{d}{dt}\|u(t)\|_H^2
=
\langle A(t)u(t),u(t)\rangle_H.
\]
Since
\[
\langle A_0v,v\rangle_H=-\nu\|\nabla v\|_{L^2}^2\le0,
\qquad
\langle D_{K(\rho(t))}v,v\rangle_H\ge0,
\]
one obtains
\[
\frac12\frac{d}{dt}\|u(t)\|_H^2\le0,
\]
hence
\[
\|u(t)\|_H\le \|u_s\|_H,\qquad \forall t\ge s.
\]
By density of $D(A_0)$ in $H$, the estimate extends to all $u_s\in H$ and
yields
\[
\|U(t,s)\|_{\mathcal L(H)}\le1.
\]

If \eqref{eq:coercivity_operator} holds uniformly on $\mathcal P$, then for
every strong solution
\[
\frac12\frac{d}{dt}\|u(t)\|_H^2
=
\langle A(t)u(t),u(t)\rangle_H
\le -\alpha_\omega\|u(t)\|_H^2.
\]
Gr\"onwall's inequality gives
\[
\|u(t)\|_H\le e^{-\alpha_\omega(t-s)}\|u_s\|_H,
\qquad \forall t\ge s.
\]
Once again, density extends the estimate to all $u_s\in H$, which proves the
claimed exponential bound.
\end{proof}

\subsection{Connection with the full Navier--Stokes system}

This subsection is interpretative and connects the linear operator analysis with
the energy estimates for the full nonlinear dynamics.

The Navier--Stokes system \eqref{eq:NS_controlled} can be written abstractly as
\[
\dot u(t)=A(\rho(t))u(t)+\mathcal B(u(t),u(t)),
\]
where
\[
\mathcal B(u,u):=-P((u\cdot\nabla)u).
\]
The skew-symmetry of $\mathcal B$ implies that it does not contribute to the
kinetic energy balance.
As a consequence, the dissipativity and exponential decay properties established
for the linear part reappear in the nonlinear system at the level of energy
estimates for Leray--Hopf weak solutions, under the localized coercivity
condition.

\begin{remark}[Role of the operator formulation]
The operator-theoretic formulation is not intended to provide a nonlinear
semigroup theory for the full Navier--Stokes equations.
Its purpose is to isolate and analyze the time-dependent dissipative structure
induced by localized damping, and to connect energy estimates for weak solutions
with stability properties of parameter-dependent linear operators.
\end{remark}

The difficulty of verifying coercivity in infinite dimension motivates the
finite-dimensional spectral procedure introduced in the next section.

\section{Continuous Observability and Localized Coercivity}
\label{sec:continuous_observability}

This section clarifies the analytical nature of the localized coercivity
condition introduced in Proposition~\ref{prop:coercivity_optimal} by relating it
to observability and spectral inequalities for parabolic operators.
The aim is not to derive new observability results for the
three-dimensional Navier--Stokes equations, but to place the coercivity condition
used throughout the paper within a precise functional-analytic setting.
This perspective also motivates the discrete spectral verification procedure
introduced in Section~\ref{sec:discrete_coercivity}.

The localized coercivity condition is not expected to hold for arbitrary damping
regions.
Its validity depends on geometric and spectral properties of the damping set
$\omega$, and it is therefore treated here through structural conditions.

\subsection{Observability for the damped Stokes dynamics}
The present analysis does not extend observability results to the nonlinear
Navier--Stokes system, but instead uses the linear theory as a structural
guideline for formulating coercivity conditions at the energy level.
Consider the linear Stokes system with localized damping
\begin{equation}
\label{eq:stokes_damped}
\partial_t v - \nu \Delta v + \nabla p + K_{\min}\chi_\omega v = 0,
\qquad \nabla\cdot v = 0,
\end{equation}
posed on the periodic domain $\mathbb{T}^3$ with divergence-free, zero-mean
initial data $v_0\in H$.
Equivalently, \eqref{eq:stokes_damped} can be written in projected form as
\[
\partial_t v = (A_0-D_{K_{\min}})v,
\]
where $A_0=\nu P\Delta$ is the Stokes operator and
$D_{K_{\min}}=P(K_{\min}\chi_\omega\,\cdot)$ is the localized damping operator.

For strong solutions generated by the associated semigroup, the energy identity reads
\begin{equation}
\label{eq:stokes_energy}
\frac{1}{2}\frac{d}{dt}\|v(t)\|_{L^2}^2
+
\nu\|\nabla v(t)\|_{L^2}^2
+
K_{\min}\|\chi_\omega^{1/2}v(t)\|_{L^2}^2
=0.
\end{equation}

\begin{definition}[Continuous observability inequality]
\label{def:observability}
System \eqref{eq:stokes_damped} is said to satisfy a continuous observability
inequality if there exists a constant $C_\omega>0$ such that
\begin{equation}
\label{eq:observability}
\|v_0\|_{L^2}^2
\le
C_\omega
\int_0^{+\infty}
\|\chi_\omega^{1/2}v(t)\|_{L^2}^2\,dt,
\qquad \forall v_0\in H,
\end{equation}
where $v$ denotes the solution of \eqref{eq:stokes_damped}.
\end{definition}

\subsection{A structural spectral condition}

The validity of coercivity or observability properties depends on the geometry of
the damping region $\omega$.
We introduce a structural assumption in the form of a spectral-type inequality.

\begin{assumption}[Spectral-type inequality]
\label{ass:geo}
The set $\omega\subset\mathbb{T}^3$ has positive measure and satisfies the
following property: there exists a constant $C_\omega>0$ such that, for all
divergence-free trigonometric polynomials $v\in H$,
\begin{equation}
\label{eq:spectral_ineq}
\|v\|_{L^2}^2
\le
C_\omega
\left(
\|\chi_\omega^{1/2}v\|_{L^2}^2
+
\|\nabla v\|_{L^2}^2
\right).
\end{equation}
\end{assumption}

This inequality should be understood as a structural spectral condition tailored
to the present coercivity problem.
It is consistent with the general philosophy of observability and spectral
inequalities of Lebeau--Robbiano type; see, for instance,
\cite{LebeauRobbiano1995,apraiz2014observability}.
Typical sufficient conditions involve thickness or non-degeneracy properties of
the set $\omega$.

\subsection{Localized coercivity and exponential stability}

The following result establishes the link between localized coercivity and
exponential decay for the damped Stokes dynamics.

\begin{theorem}[Coercivity and exponential stability]
\label{thm:observability_coercivity}
Let $\omega\subset\mathbb{T}^3$ be measurable and let $K_{\min}\ge 0$.
Assume that there exists $\alpha_\omega>0$ such that
\begin{equation}
\label{eq:coercivity_obs}
\nu\|\nabla v\|_{L^2}^2
+
K_{\min}\|\chi_\omega^{1/2}v\|_{L^2}^2
\ge
\alpha_\omega\|v\|_{L^2}^2,
\qquad \forall v\in V.
\end{equation}
Then the semigroup generated by $A_0-D_{K_{\min}}$ on $H$ is exponentially
stable in $L^2$, namely
\[
\|v(t)\|_{L^2}\le e^{-\alpha_\omega t}\|v_0\|_{L^2},
\qquad \forall t\ge0.
\]
\end{theorem}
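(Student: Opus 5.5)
The plan is to reduce the statement to Proposition~\ref{prop:gen_dissip}, applied with the constant parameter choice $K(\rho)\equiv K_{\min}$, so that $A(\rho)=A_0-D_{K_{\min}}$. That proposition already gives generation of a contraction $\mathcal C_0$-semigroup, via the bounded perturbation theorem applied to the analytic contraction semigroup $e^{tA_0}$. It also yields the bound $\|e^{t(A_0-D_{K_{\min}})}\|_{\mathcal L(H)}\le e^{-\alpha_\omega t}$, provided the operator coercivity \eqref{eq:coercivity_operator} holds. So the only real task is to derive \eqref{eq:coercivity_operator} from the form inequality \eqref{eq:coercivity_obs}.

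Fix $v\in D(A_0)=H^2\cap H\subset V$. I will compute $-\langle (A_0-D_{K_{\min}})v,v\rangle_H$ exactly as in the proof of Proposition~\ref{prop:gen_dissip}. The orthogonality of $P$ and periodic integration by parts give $-\langle A_0 v,v\rangle_H=\nu\|\nabla v\|_{L^2}^2$. Identity \eqref{eq:Dk_positive} gives $\langle D_{K_{\min}}v,v\rangle_H=K_{\min}\|\chi_\omega^{1/2}v\|_{L^2}^2$. Adding these, the quantity $-\langle (A_0-D_{K_{\min}})v,v\rangle_H$ equals precisely the left-hand side of \eqref{eq:coercivity_obs}. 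Since $v\in V$, that inequality then gives $-\langle (A_0-D_{K_{\min}})v,v\rangle_H\ge\alpha_\omega\|v\|_{L^2}^2$, which is \eqref{eq:coercivity_operator}. Uniformity in $\rho$ is trivial because the parameter is frozen.

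For a self-contained presentation I would also spell out the decay argument rather than only citing the proposition. Take $v_0\in D(A_0)$ and set $v(t)=e^{t(A_0-D_{K_{\min}})}v_0$. This is a strong solution, so $t\mapsto\|v(t)\|^2$ is $C^1$, and the energy identity \eqref{eq:stokes_energy} combined with the coercivity gives $\frac{d}{dt}\|v\|^2\le-2\alpha_\omega\|v\|^2$. Gr\"onwall's inequality then yields $\|v(t)\|_{L^2}\le e^{-\alpha_\omega t}\|v_0\|_{L^2}$. Finally, the bound extends to all $v_0\in H$ because $D(A_0)$ is dense in $H$ and each $e^{t(A_0-D_{K_{\min}})}$ is bounded, so the estimate passes to limits.

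I expect no genuine obstacle here. The one point needing care is the regularity of the strong solution: one must check that $v\in C^1([0,\infty);H)\cap C([0,\infty);D(A_0))$, so that differentiating $\|v(t)\|^2$ and the integration by parts are legitimate. This follows from standard semigroup theory for initial data in the domain, since the bounded perturbation $D_{K_{\min}}$ does not change the domain.
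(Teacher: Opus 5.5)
Your proposal is correct and follows the paper's proof: generation via the bounded perturbation theorem, the energy identity for strong solutions with $v_0\in D(A_0)$, the coercivity \eqref{eq:coercivity_obs} applied to $v(t)\in D(A_0)\subset V$, Gr\"onwall's inequality, and extension to all of $H$ by density. Your preliminary reduction to the frozen-parameter case $K\equiv K_{\min}$ of Proposition~\ref{prop:gen_dissip} is only a repackaging of the same computation, since it rests on checking that $-\langle (A_0-D_{K_{\min}})v,v\rangle_H$ equals the left-hand side of \eqref{eq:coercivity_obs}.
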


\begin{proof}
Let $v_0\in D(A_0)$ and define
\[
v(t)=e^{t(A_0-D_{K_{\min}})}v_0.
\]
Since $A_0$ generates an analytic semigroup on $H$ and
$-D_{K_{\min}}\in\mathcal L(H)$ is a bounded perturbation, the operator
$A_0-D_{K_{\min}}$ generates a $\mathcal C_0$-semigroup on $H$.
Moreover, for $v_0\in D(A_0)$, the function $v$ is a strong solution of
\eqref{eq:stokes_damped}, and
\[
v\in C([0,\infty);D(A_0))\cap C^1([0,\infty);H).
\]

Taking the $L^2$ inner product of \eqref{eq:stokes_damped} with $v(t)$ yields
the energy identity
\begin{equation}
\label{eq:proof_energy_identity}
\frac12\frac{d}{dt}\|v(t)\|_{L^2}^2
+
\nu\|\nabla v(t)\|_{L^2}^2
+
K_{\min}\|\chi_\omega^{1/2}v(t)\|_{L^2}^2
=0,
\qquad \forall t\ge0.
\end{equation}

Applying the coercivity estimate \eqref{eq:coercivity_obs} to $v(t)\in V$, we obtain
\[
\nu\|\nabla v(t)\|_{L^2}^2
+
K_{\min}\|\chi_\omega^{1/2}v(t)\|_{L^2}^2
\ge
\alpha_\omega\|v(t)\|_{L^2}^2,
\qquad \forall t\ge0.
\]
Substituting into \eqref{eq:proof_energy_identity} gives
\[
\frac12\frac{d}{dt}\|v(t)\|_{L^2}^2
+
\alpha_\omega\|v(t)\|_{L^2}^2
\le 0.
\]
Setting $E(t)=\|v(t)\|_{L^2}^2$, we obtain
\[
\frac{d}{dt}E(t)+2\alpha_\omega E(t)\le0.
\]
By Gr\"onwall's inequality,
\[
E(t)\le e^{-2\alpha_\omega t}E(0),
\qquad \forall t\ge0,
\]
which yields
\[
\|v(t)\|_{L^2}
\le
e^{-\alpha_\omega t}\|v_0\|_{L^2},
\qquad \forall t\ge0.
\]

It remains to extend the estimate to arbitrary initial data $v_0\in H$.
Let $(v_0^n)\subset D(A_0)$ be such that $v_0^n\to v_0$ in $H$.
Then
\[
\|e^{t(A_0-D_{K_{\min}})}v_0^n\|_{L^2}
\le
e^{-\alpha_\omega t}\|v_0^n\|_{L^2}.
\]
Passing to the limit as $n\to\infty$ and using the strong continuity of the
semigroup on $H$, we obtain
\[
\|e^{t(A_0-D_{K_{\min}})}v_0\|_{L^2}
\le
e^{-\alpha_\omega t}\|v_0\|_{L^2},
\qquad \forall t\ge0.
\]
This concludes the proof.
\end{proof}

\begin{corollary}[Localized coercivity under Assumption~\ref{ass:geo}]
\label{cor:localized_coercivity_geo}
Under Assumption~\ref{ass:geo}, there exists a constant $\alpha_\omega>0$ such that
\begin{equation}
\label{eq:coercivity_geo}
\nu\|\nabla v\|_{L^2}^2
+
K_{\min}\|\chi_\omega^{1/2}v\|_{L^2}^2
\ge
\alpha_\omega \|v\|_{L^2}^2,
\qquad \forall v\in V.
\end{equation}
More precisely, one may take
\[
\alpha_\omega = C_\omega^{-1}\min(\nu,K_{\min}),
\]
where $C_\omega$ is the constant in \eqref{eq:spectral_ineq}.
\end{corollary}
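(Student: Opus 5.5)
The plan is a direct lower bound of the left-hand side of \eqref{eq:coercivity_geo} by $\min(\nu,K_{\min})$ times the bracket in \eqref{eq:spectral_ineq}. We then invoke Assumption~\ref{ass:geo}. Since Assumption~\ref{ass:geo} is stated only for divergence-free trigonometric polynomials, a density argument is needed to reach all of $V$.

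\textbf{Step 1 (trigonometric polynomials).} Let $v\in H$ be a divergence-free trigonometric polynomial. Then
\[
\nu\|\nabla v\|_{L^2}^2+K_{\min}\|\chi_\omega^{1/2}v\|_{L^2}^2
\ge \min(\nu,K_{\min})\bigl(\|\nabla v\|_{L^2}^2+\|\chi_\omega^{1/2}v\|_{L^2}^2\bigr)
\ge C_\omega^{-1}\min(\nu,K_{\min})\,\|v\|_{L^2}^2,
\]
where the last step is \eqref{eq:spectral_ineq}. This is \eqref{eq:coercivity_geo} with $\alpha_\omega=C_\omega^{-1}\min(\nu,K_{\min})$.

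\textbf{Step 2 (density).} Take $v\in V$ and let $v_N$ be its Fourier truncation to $|k|\le N$. On the torus the Fourier projection commutes with divergence and preserves zero mean. Hence $v_N$ is a divergence-free trigonometric polynomial in $H$, and $v_N\to v$ in $H^1(\mathbb{T}^3)$.

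Each term of \eqref{eq:coercivity_geo} is continuous in the $H^1$ topology:
\begin{itemize}
\item $\|\nabla v_N\|_{L^2}\to\|\nabla v\|_{L^2}$ and $\|v_N\|_{L^2}\to\|v\|_{L^2}$ follow directly from $H^1$ convergence;
\item $\bigl|\|\chi_\omega^{1/2}v_N\|_{L^2}-\|\chi_\omega^{1/2}v\|_{L^2}\bigr|\le\|\chi_\omega\|_{L^\infty}^{1/2}\|v_N-v\|_{L^2}\to0$, using $\chi_\omega\in L^\infty$.
\end{itemize}
Passing to the limit in the inequality of Step 1 for $v_N$ therefore gives \eqref{eq:coercivity_geo} for all $v\in V$.

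\textbf{Obstacles and caveats.} The only genuinely delicate points are bookkeeping rather than analysis. First, when $K_{\min}=0$ the constant $\alpha_\omega$ as stated vanishes. The claim of positivity then needs $K_{\min}>0$, or alternatively the fallback $\alpha_\omega\ge\nu\lambda_1$ from the Poincaré inequality, as in Corollary~\ref{cor:coercivity_sufficient}. I would note this explicitly. Second, one must confirm that the truncations stay in the class where \eqref{eq:spectral_ineq} is assumed. This holds because Fourier truncation preserves both the divergence-free and the zero-mean conditions. I would also remark that, by Proposition~\ref{prop:coercivity_optimal}, the resulting bound gives $\alpha_\omega(K_{\min})\ge C_\omega^{-1}\min(\nu,K_{\min})$.
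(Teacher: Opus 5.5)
Your proposal is correct and follows the paper's proof. The paper first extends \eqref{eq:spectral_ineq} to all of $V$ by density and then applies the $\min(\nu,K_{\min})$ lower bound, while you do these two steps in the opposite order; this makes no difference, and your Fourier-truncation argument simply writes out the density step the paper only asserts. Your caveat is also correct: for $K_{\min}=0$ the stated constant $C_\omega^{-1}\min(\nu,K_{\min})$ is zero, so the claimed positivity must then come from the Poincar\'e bound $\nu\lambda_1$ rather than from that formula.
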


\begin{proof}
By Assumption~\ref{ass:geo}, one has
\[
\|v\|_{L^2}^2
\le
C_\omega
\left(
\|\chi_\omega^{1/2}v\|_{L^2}^2
+
\|\nabla v\|_{L^2}^2
\right)
\]
for all divergence-free trigonometric polynomials $v$.
Since such functions are dense in $V$, the inequality extends to all
$v\in V$ by continuity.

Moreover,
\[
\nu\|\nabla v\|_{L^2}^2
+
K_{\min}\|\chi_\omega^{1/2}v\|_{L^2}^2
\ge
\min(\nu,K_{\min})
\left(
\|\nabla v\|_{L^2}^2
+
\|\chi_\omega^{1/2}v\|_{L^2}^2
\right).
\]
Combining the two inequalities yields \eqref{eq:coercivity_geo}.
\end{proof}

\begin{remark}
The spectral-type inequality \eqref{eq:spectral_ineq} is known to hold under
additional geometric assumptions on the set $\omega$, such as thickness or
non-degeneracy conditions.
In periodic domains, Fourier-based arguments may simplify the analysis compared
to bounded domains with boundary conditions, although the precise characterization
of admissible sets $\omega$ remains delicate.
A complete treatment of such geometric conditions is beyond the scope of the
present work.
\end{remark}

\subsection{Limitations in the nonlinear Navier--Stokes setting}

The above results are linear and rely on properties of the Stokes operator.
Quantitative observability and unique continuation estimates are well understood
for linear parabolic systems.

In contrast, extending such properties to the fully nonlinear
three-dimensional Navier--Stokes equations in a form compatible with
Leray--Hopf weak solutions remains out of reach.
No global observability inequality at the energy level, with uniform constants
and without additional regularity assumptions, is currently available.

\subsection{Motivation for the discrete spectral procedure}

In view of the above discussion, establishing localized coercivity in infinite
dimension is closely related to proving suitable spectral or observability-type
inequalities.

Since such properties depend on the geometry of $\omega$ and are difficult to
verify in practice, a finite-dimensional spectral verification procedure is
introduced in Section~\ref{sec:discrete_coercivity}.

The discrete coercivity constant $\alpha_{\omega,N}$ should be interpreted as a
computable proxy for the continuous spectral gap $\alpha_\omega$, evaluated on
finite-dimensional divergence-free subspaces.

\section{Discrete Verification of the Localized Coercivity Constant}
\label{sec:discrete_coercivity}

This section introduces a finite-dimensional spectral procedure to assess the
effectiveness of localized damping in a computable manner, consistently with the
energy-based coercivity mechanism identified at the continuous level.

\subsection{Motivation and reduced-order setting}

The exponential decay estimates established in
Section~\ref{sec:energy_stabilization} rely on a localized coercivity mechanism,
expressed either in variational form
(Proposition~\ref{prop:coercivity_optimal}) or through the operator inequality
\eqref{eq:coercivity_operator}.
In infinite dimensions, verifying such properties typically requires spectral or
observability arguments, as discussed in
Section~\ref{sec:continuous_observability}.

A complementary approach is adopted here: localized coercivity is assessed on a
finite-dimensional divergence-free subspace.
The resulting discrete constant provides a stability indicator consistent with
the continuous coercivity mechanism, restricted to a resolved modal subspace.

Let $\{\varphi_i\}_{i=1}^N \subset V$ be a divergence-free Galerkin basis of $H$,
orthonormal in $H$, that is,
\[
\langle \varphi_i,\varphi_j\rangle_H = \delta_{ij}.
\]
Define
\[
H_N := \mathrm{span}\{\varphi_1,\ldots,\varphi_N\}, \qquad
v_N = \sum_{i=1}^N z_i \varphi_i,\quad z\in\mathbb{R}^N.
\]
Then $\|v_N\|_H^2 = \|z\|^2$.

Define the matrices
\begin{align}
S_{ij} &:= \nu \langle \nabla \varphi_i, \nabla \varphi_j\rangle_{L^2},\\
(M_\omega)_{ij} &:= \langle \chi_\omega \varphi_i, \varphi_j\rangle_{L^2}.
\end{align}

\begin{lemma}
The matrices $S$ and $M_\omega$ are symmetric and positive semidefinite.
Moreover, $S$ is positive definite on $H_N$ provided that $H_N$ contains no
nontrivial constant vector fields.
\end{lemma}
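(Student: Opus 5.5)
The plan is to identify both matrices as Gram-type matrices of nonnegative symmetric bilinear forms and read off the properties from the associated quadratic forms in $z\in\mathbb{R}^N$.

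\emph{Symmetry.} Symmetry is immediate from the symmetry of the $L^2$ inner product. We have $S_{ij}=\nu\langle\nabla\varphi_i,\nabla\varphi_j\rangle_{L^2}=S_{ji}$. Since $\chi_\omega$ is real-valued, $(M_\omega)_{ij}=\int_{\mathbb{T}^3}\chi_\omega\,\varphi_i\cdot\varphi_j\,dx=(M_\omega)_{ji}$. All entries are finite because $\varphi_i\in V\subset H^1$ and $\chi_\omega\in L^\infty$.

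\emph{Semidefiniteness.} For $z\in\mathbb{R}^N$ set $v_N=\sum_i z_i\varphi_i\in H_N$. Bilinearity then gives
\[
z^\top S z=\nu\|\nabla v_N\|_{L^2}^2\ge0,
\qquad
z^\top M_\omega z=\int_{\mathbb{T}^3}\chi_\omega|v_N|^2\,dx=\|\chi_\omega^{1/2}v_N\|_{L^2}^2\ge0,
\]
where the second inequality uses $\chi_\omega\ge0$.

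\emph{Definiteness of $S$.} This is the only step with content. Suppose $z^\top Sz=0$. Then $\nabla v_N=0$ in $L^2$, since $\nu>0$. On the connected torus this forces $v_N$ to equal a constant vector field almost everywhere. The hypothesis that $H_N$ contains no nontrivial constant fields then gives $v_N=0$. Orthonormality of $\{\varphi_i\}$ in $H$ gives $\|z\|^2=\|v_N\|_H^2=0$, so $z=0$.

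We note that the hypothesis is in fact automatic here, because $H_N\subset H$ consists of zero-mean fields. Equivalently, one could invoke the Poincaré inequality $\|\nabla v\|^2\ge\lambda_1\|v\|^2$ on $H$, already used in Corollary~\ref{cor:coercivity_sufficient}. This yields the quantitative bound $z^\top Sz\ge\nu\lambda_1\|z\|^2$, so the smallest eigenvalue of $S$ is at least $\nu\lambda_1$.

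The only potential obstacle is justifying ``$\nabla v=0\Rightarrow v$ constant'' for an $H^1$ function on $\mathbb{T}^3$. This follows either from Fourier series, since all nonzero modes must vanish, or directly from the Poincaré inequality. I would use the Fourier argument for self-containedness.
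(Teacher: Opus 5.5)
Your proof is correct and follows the paper's argument essentially verbatim: symmetry comes from the $L^2$ inner product, semidefiniteness from $z^\top S z=\nu\|\nabla v_N\|_{L^2}^2$ and $z^\top M_\omega z=\|\chi_\omega^{1/2}v_N\|_{L^2}^2$, and definiteness from $\nabla v_N=0\Rightarrow v_N$ constant $\Rightarrow v_N=0$. Your observation that the hypothesis is automatic matches what the paper's proof actually uses, since it concludes from the zero mean of $v_N\in H$, and your Poincar\'e remark giving $\lambda_{\min}(S)\ge\nu\lambda_1$ is a valid quantitative addition.
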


\begin{proof}
Symmetry follows from the symmetry of the $L^2$ inner product.

Let $z\in\mathbb{R}^N$ and define $v_N=\sum z_i\varphi_i \in H_N\subset V$.
Then
\[
z^\top S z = \nu \|\nabla v_N\|_{L^2}^2 \ge 0,
\qquad
z^\top M_\omega z = \|\chi_\omega^{1/2} v_N\|_{L^2}^2 \ge 0,
\]
which proves positive semidefiniteness.

If $z^\top S z=0$, then $\nabla v_N=0$, hence $v_N$ is constant.
Since $v_N\in H$ has zero mean, it follows that $v_N=0$, hence $z=0$.
\end{proof}

\subsection{Discrete coercivity constant}
The discrete coercivity constant admits the following characterization.
\begin{proposition}[Discrete coercivity]
\label{prop:discrete_coercivity}
There exists a constant $\alpha_{\omega,N}\ge 0$ such that
\begin{equation}
\label{eq:coercivity_discrete}
z^\top\big(S + K_{\min} M_\omega\big) z \ge
\alpha_{\omega,N}\, \|z\|^2,
\qquad \forall z\in\mathbb{R}^N,
\end{equation}
and the largest such constant is
\[
\alpha_{\omega,N} = \lambda_{\min}(S + K_{\min}M_\omega).
\]
\end{proposition}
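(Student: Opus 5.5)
The argument is a direct application of the spectral theorem for real symmetric matrices, combined with the preceding lemma. Set $Q:=S+K_{\min}M_\omega$. By that lemma, $S$ and $M_\omega$ are symmetric and positive semidefinite, and $K_{\min}\ge 0$. Hence $Q$ is symmetric and positive semidefinite, and all its eigenvalues are real and nonnegative. In particular $\lambda_{\min}(Q)\ge 0$, which will give the sign condition $\alpha_{\omega,N}\ge0$ once the identification is made.

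Next I would diagonalize $Q=U\Lambda U^\top$ with $U$ orthogonal and $\Lambda=\mathrm{diag}(\lambda_1,\dots,\lambda_N)$, ordered so that $\lambda_1=\lambda_{\min}(Q)$. Writing $y=U^\top z$, orthogonality gives $\|y\|=\|z\|$, and therefore
\[
z^\top Q z=\sum_{i=1}^N\lambda_i y_i^2\ge \lambda_{\min}(Q)\|y\|^2=\lambda_{\min}(Q)\|z\|^2 .
\]
This shows that \eqref{eq:coercivity_discrete} holds with the constant $\lambda_{\min}(Q)$, which settles existence.

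For optimality, suppose $\beta$ is any constant for which \eqref{eq:coercivity_discrete} holds. Take $z=e$, a unit eigenvector of $Q$ associated with $\lambda_{\min}(Q)$. Then $\beta\le e^\top Qe=\lambda_{\min}(Q)$, so $\lambda_{\min}(Q)$ is the largest admissible constant. Equivalently, $\lambda_{\min}(Q)=\min_{z\ne0} z^\top Qz/\|z\|^2$ (the Rayleigh quotient). This is the finite-dimensional analogue of Proposition~\ref{prop:coercivity_optimal}: since $\|v_N\|_H=\|z\|$ by orthonormality of the basis, the statement says that $\alpha_{\omega,N}$ is the infimum of the quotient in \eqref{eq:alphaomega_def} restricted to $H_N$.

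There is no genuine obstacle here. The only point needing care is the interpretation of "there exists a constant $\alpha_{\omega,N}\ge0$": nonnegativity comes from positive semidefiniteness. If strict positivity were wanted, it would follow from the lemma's positive definiteness of $S$, which holds because $H_N\subset H$ contains no nonzero constants.
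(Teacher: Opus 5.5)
Your proof is correct and follows the same route as the paper: you diagonalize the symmetric matrix $S+K_{\min}M_\omega$ via the spectral theorem to get $z^\top(S+K_{\min}M_\omega)z\ge\lambda_{\min}\|z\|^2$, and you test on a $\lambda_{\min}$-eigenvector for optimality. You also explicitly justify the sign condition $\alpha_{\omega,N}\ge 0$ through the positive semidefiniteness given by the preceding lemma, a point the paper's proof leaves implicit.
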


\begin{proof}
The matrix $A:=S+K_{\min}M_\omega$ is symmetric.
By the spectral theorem, it admits an orthonormal eigenbasis with real eigenvalues.
Let $\lambda_{\min}$ denote its smallest eigenvalue.

For any $z\in\mathbb{R}^N$, writing $z$ in this eigenbasis yields
\[
z^\top A z \ge \lambda_{\min} \|z\|^2.
\]
Optimality follows since equality holds for eigenvectors associated with
$\lambda_{\min}$.
\end{proof}

\subsection{Reduced-order dynamics and well-posedness}

Projecting \eqref{eq:NS_controlled} onto $H_N$ yields
\begin{equation}
\label{eq:ROM_closed_loop}
\dot z(t) = A_N(\rho(t)) z(t) + \mathcal{N}_N(z(t)),
\end{equation}
where $A_N(\rho)$ is the reduced linear operator and $\mathcal{N}_N$ is the
reduced convection operator.

\begin{lemma}[Well-posedness]
\label{lem:ROM_wellposed}
For any $z(0)\in\mathbb{R}^N$, system \eqref{eq:ROM_closed_loop} admits a unique
global solution $z\in C^1([0,\infty);\mathbb{R}^N)$.
\end{lemma}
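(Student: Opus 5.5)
The argument is the standard one for Galerkin systems: local existence and uniqueness from Carathéodory theory, and global existence from an a priori bound obtained from the energy identity of Lemma~\ref{lem:energy_ineq} at the Galerkin level. The reduced system \eqref{eq:ROM_closed_loop} has components
\[
\dot z_i=-\sum_j S_{ij}z_j-K(\rho(t))\sum_j (M_\omega)_{ij}z_j-\sum_{j,k} b_{ijk}z_jz_k,
\qquad
b_{ijk}:=\int_{\mathbb{T}^3}(\varphi_j\cdot\nabla)\varphi_k\cdot\varphi_i\,dx .
\]
Here $A_N(\rho)=-(S+K(\rho)M_\omega)$, and $\mathcal N_N$ is a quadratic polynomial in $z$, hence locally Lipschitz. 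The coefficients $b_{ijk}$ are finite because $\varphi_i\in V$: Sobolev embedding $H^1\subset L^4$ gives $|b_{ijk}|\le \|\varphi_j\|_{L^4}\|\nabla\varphi_k\|_{L^2}\|\varphi_i\|_{L^4}$. Pressure disappears since the $\varphi_i$ are divergence-free.

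\textbf{Step 1 (local well-posedness).} The vector field $F(t,z)=A_N(\rho(t))z+\mathcal N_N(z)$ is measurable in $t$ and locally Lipschitz in $z$. The Lipschitz constant is uniform on bounded $z$-sets because $0\le K(\rho(t))\le K_{\max}$ is bounded (Assumption~\ref{ass:rho_reg}, or merely boundedness of $K(\rho(\cdot))$). Carathéodory's theorem therefore gives a unique maximal absolutely continuous solution on $[0,T^*)$.

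\textbf{Step 2 (a priori bound).} I take the inner product with $z$. Skew-symmetry $b_{ijk}=-b_{ikj}$ follows from the computation in the proof of Lemma~\ref{lem:energy_ineq}, so $z^\top\mathcal N_N(z)=0$. Together with the positive semidefiniteness of $S$ and $M_\omega$ from the preceding lemma,
\[
\tfrac12\tfrac{d}{dt}\|z\|^2=-z^\top S z-K(\rho(t))\,z^\top M_\omega z\le 0 .
\]
Hence $\|z(t)\|\le\|z(0)\|$, the solution cannot blow up, and the blow-up alternative gives $T^*=\infty$.

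\textbf{Step 3 ($C^1$ regularity).} This is the main subtlety. With only measurable $\rho$, the solution is just absolutely continuous. Under Assumption~\ref{ass:rho_reg}, however, $t\mapsto K(\rho(t))$ is Lipschitz, hence continuous. Then $F$ is continuous in $(t,z)$, Picard--Lindelöf applies directly, and $z\in C^1([0,\infty);\mathbb R^N)$. I would state this as the standing hypothesis for the lemma, or alternatively interpret $C^1$ in the a.e. sense when $K(\rho(\cdot))$ is merely bounded measurable.
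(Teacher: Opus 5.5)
Your argument is the paper's own: local existence for a right-hand side polynomial in $z$, then the energy identity $\tfrac12\tfrac{d}{dt}\|z\|^2=-z^\top(S+K(\rho(t))M_\omega)z\le 0$ to rule out blow-up. It is more careful than the paper's proof in two respects: the paper simply \emph{assumes} $\langle\mathcal N_N(z),z\rangle=0$, which you derive from the divergence-free Galerkin structure, and it ignores the $t$-dependence through $K(\rho(t))$ when claiming $C^1$, whereas you correctly note this needs continuity of $t\mapsto K(\rho(t))$, which Assumption~\ref{ass:rho_reg} supplies (otherwise one only gets an absolutely continuous Carath\'eodory solution).

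One slip to fix: with $b_{ijk}=b(\varphi_j,\varphi_k,\varphi_i)$ and $b(u,v,w)=\int(u\cdot\nabla)v\cdot w\,dx$ antisymmetric in $(v,w)$, the correct identity is $b_{ijk}=-b_{kji}$, not $b_{ijk}=-b_{ikj}$, since the latter would force $\mathcal N_N\equiv 0$. Your conclusion $z^\top\mathcal N_N(z)=-\int(v_N\cdot\nabla)v_N\cdot v_N\,dx=0$, with $v_N=\sum_i z_i\varphi_i$, is nonetheless right, by exactly the computation in Lemma~\ref{lem:energy_ineq}.
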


\begin{proof}
The right-hand side of \eqref{eq:ROM_closed_loop} is a polynomial function of $z$
and is therefore locally Lipschitz.
Hence a unique maximal solution exists.

Assume that the nonlinear term is energy-preserving:
\[
\langle \mathcal{N}_N(z), z\rangle = 0.
\]
Then
\[
\frac{1}{2}\frac{d}{dt}\|z(t)\|^2
=
- z(t)^\top\big(S + K(\rho(t))M_\omega\big)z(t)
\le 0.
\]
Thus $\|z(t)\|$ is non-increasing and remains bounded for all time.
Therefore no finite-time blow-up can occur, and the solution is global.
\end{proof}

\subsection{Exponential decay of the reduced model}
Exponential decay follows from the localized coercivity property.
\begin{proposition}[Exponential decay]
\label{prop:ROM_decay}
Assume that $K(\rho(t))\ge K_{\min}>0$.
Then any solution of \eqref{eq:ROM_closed_loop} satisfies
\[
\|z(t)\| \le e^{-\alpha_{\omega,N} t}\|z(0)\|.
\]
\end{proposition}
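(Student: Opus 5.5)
The plan is to run the same energy argument as in Theorem~\ref{thm:exp_decay}, but now at the finite-dimensional level, where every step is classical because $z\in C^1([0,\infty);\mathbb{R}^N)$ by Lemma~\ref{lem:ROM_wellposed}.

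First I will make the structure of \eqref{eq:ROM_closed_loop} explicit. Galerkin projection of \eqref{eq:NS_controlled} onto $H_N$, with the orthonormal basis $\{\varphi_i\}$, gives
\[
A_N(\rho)=-(S+K(\rho)M_\omega),\qquad
\mathcal N_N(z)_i=-\langle (v_N\cdot\nabla)v_N,\varphi_i\rangle_{L^2}.
\]
The first identity follows from $\langle\nu\Delta\varphi_j,\varphi_i\rangle=-S_{ij}$, with the periodic integration by parts already used for $A_0$. The second uses that the pressure gradient is orthogonal to the divergence-free $\varphi_i$. Consequently
\[
\langle \mathcal N_N(z),z\rangle=-\int_{\mathbb T^3}(v_N\cdot\nabla)v_N\cdot v_N\,dx=0,
\]
by the same computation as in the proof of Lemma~\ref{lem:energy_ineq}. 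This justifies the energy-preservation property that the well-posedness lemma only assumed.

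Next, I differentiate $E(t)=\|z(t)\|^2$. This gives
\[
\tfrac12\dot E=-z^\top(S+K(\rho(t))M_\omega)z.
\]
Since $M_\omega$ is positive semidefinite and $K(\rho(t))\ge K_{\min}$, we have
\[
z^\top K(\rho(t))M_\omega z\ge K_{\min}\,z^\top M_\omega z .
\]
Proposition~\ref{prop:discrete_coercivity} then yields $\tfrac12\dot E\le-\alpha_{\omega,N}E$ for almost every $t$. The gain is only measurable, so the equation holds in the Carathéodory sense, with $z$ absolutely continuous. To handle this cleanly, I will apply Gr\"onwall in the form $\frac{d}{dt}\big(e^{2\alpha_{\omega,N}t}E(t)\big)\le0$ almost everywhere for an absolutely continuous function. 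This gives $E(t)\le e^{-2\alpha_{\omega,N}t}E(0)$, and taking square roots concludes the proof.

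The main obstacle is not analytic but structural: verifying that the reduced nonlinearity is genuinely energy-preserving. That requires $H_N\subset V$, divergence-free basis functions, and a Galerkin projection that is orthogonal in $L^2$, so that the trilinear form is antisymmetric on $H_N$. If the reduced model were obtained by some other closure, this identity could fail. In that case I would restrict the statement to Galerkin-consistent reductions.

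A minor point is that the regularity $C^1$ claimed in Lemma~\ref{lem:ROM_wellposed} presupposes a continuous gain. For merely bounded measurable $K(\rho(\cdot))$, I will work with absolutely continuous solutions instead, and the argument above is unchanged.
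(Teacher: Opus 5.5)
Your proposal is correct and follows the same route as the paper: the Euclidean energy identity $\tfrac12\frac{d}{dt}\|z\|^2=-z^\top(S+K(\rho(t))M_\omega)z$, then $K(\rho(t))\ge K_{\min}$ combined with Proposition~\ref{prop:discrete_coercivity}, then Gr\"onwall. Your explicit derivation of $A_N(\rho)=-(S+K(\rho)M_\omega)$, your check that $\langle\mathcal N_N(z),z\rangle=0$ for the Galerkin reduction, and your use of absolutely continuous solutions when the gain is only measurable make rigorous points that the paper's proof uses without stating them.
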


\begin{proof}
Taking the Euclidean inner product of \eqref{eq:ROM_closed_loop} with $z(t)$ yields
\[
\frac{1}{2}\frac{d}{dt}\|z(t)\|^2
=
- z(t)^\top\big(S + K(\rho(t))M_\omega\big)z(t).
\]
Using $K(\rho(t))\ge K_{\min}$ and
Proposition~\ref{prop:discrete_coercivity}, one obtains
\[
z^\top(S + K(\rho(t))M_\omega)z
\ge
\alpha_{\omega,N}\|z\|^2.
\]
Hence
\[
\frac{d}{dt}\|z(t)\|^2 \le -2\alpha_{\omega,N}\|z(t)\|^2.
\]
Gr\"onwall's inequality yields the result.
\end{proof}

\subsection{Monotonicity and robustness}
The discrete coercivity constant satisfies the following monotonicity and robustness properties.
\begin{proposition}[Monotonicity]
\label{prop:alpha_monotone}
The map $K_{\min}\mapsto \alpha_{\omega,N}(K_{\min})$ is nondecreasing.
\end{proposition}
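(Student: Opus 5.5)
The plan is to combine the variational (Rayleigh-quotient) characterization of the smallest eigenvalue with the fact that $M_\omega$ is positive semidefinite, as shown in the lemma preceding Proposition~\ref{prop:discrete_coercivity}. By Proposition~\ref{prop:discrete_coercivity},
\[
\alpha_{\omega,N}(K_{\min})=\lambda_{\min}(S+K_{\min}M_\omega)=\min_{\|z\|=1} z^\top\big(S+K_{\min}M_\omega\big)z .
\]
The minimum is attained because the unit sphere in $\mathbb{R}^N$ is compact and the quadratic form is continuous. So $\alpha_{\omega,N}$ is a pointwise minimum of a family of affine functions of $K_{\min}$.

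Fix $0\le K_1\le K_2$. For every unit vector $z$,
\[
z^\top(S+K_2M_\omega)z = z^\top(S+K_1M_\omega)z+(K_2-K_1)\,z^\top M_\omega z \ge z^\top(S+K_1M_\omega)z \ge \alpha_{\omega,N}(K_1),
\]
using $K_2-K_1\ge0$, $z^\top M_\omega z=\|\chi_\omega^{1/2}v_N\|_{L^2}^2\ge0$, and the coercivity inequality \eqref{eq:coercivity_discrete} at $K_1$. Taking the minimum over unit $z$ then gives $\alpha_{\omega,N}(K_2)\ge\alpha_{\omega,N}(K_1)$. Equivalently, one can note that $S+K_2M_\omega\succeq S+K_1M_\omega$ in the Loewner order and invoke monotonicity of $\lambda_{\min}$, but the direct Rayleigh-quotient argument avoids citing Weyl's inequalities.

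There is no real obstacle. The only points to state carefully are that the map is defined on $K_{\min}\ge0$, which keeps $M_\omega$'s contribution with the right sign, and that the optimality part of Proposition~\ref{prop:discrete_coercivity} is what identifies $\alpha_{\omega,N}$ with $\lambda_{\min}$ and hence with the minimum of the quotient. As a by-product, $\alpha_{\omega,N}$ is concave in $K_{\min}$, being a minimum of affine functions. It is also Lipschitz with constant $\lambda_{\max}(M_\omega)\le\|\chi_\omega\|_{L^\infty}$, which may be useful for the robustness statements that follow.
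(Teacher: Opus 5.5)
Your proof is correct and is essentially the paper's argument. The paper writes $S+K_2M_\omega=(S+K_1M_\omega)+(K_2-K_1)M_\omega$ with $(K_2-K_1)M_\omega\succeq 0$ and cites the min--max principle, while you unpack that principle for $\lambda_{\min}$ through the Rayleigh quotient; your extra remarks on concavity and the Lipschitz bound $\lambda_{\max}(M_\omega)\le\|\chi_\omega\|_{L^\infty}$ are also correct.
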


\begin{proof}
Let $K_2\ge K_1$. Then
\[
S+K_2M_\omega = (S+K_1M_\omega) + (K_2-K_1)M_\omega,
\]
with $(K_2-K_1)M_\omega \succeq 0$.
The monotonicity of the smallest eigenvalue follows from the min--max principle.
\end{proof}

\begin{proposition}[Robustness under scheduling]
\label{prop:alpha_lpv}
If $K(\rho(t))\ge K_{\min}$ almost everywhere, then the decay rate is bounded
below by $\alpha_{\omega,N}(K_{\min})$.
\end{proposition}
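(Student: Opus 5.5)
The plan is to read ``the decay rate'' as the exponential rate in the energy estimate for solutions of the reduced model \eqref{eq:ROM_closed_loop}. The argument will show that the pointwise-in-time coercivity of the frozen reduced operator holds uniformly along the scheduled trajectory, with constant $\alpha_{\omega,N}(K_{\min})$. The proof reduces to Proposition~\ref{prop:ROM_decay}, using Proposition~\ref{prop:alpha_monotone} as the mechanism that transfers the lower bound on the gain to a lower bound on the instantaneous rate.

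First, fix a solution $z$, which by Lemma~\ref{lem:ROM_wellposed} is global and $C^1$, and recall the energy identity
\[
\frac12\frac{d}{dt}\|z(t)\|^2=-z(t)^\top\big(S+K(\rho(t))M_\omega\big)z(t).
\]
This identity uses $\langle\mathcal N_N(z),z\rangle=0$; that holds because the $\varphi_i$ lie in $V$ and the trilinear form is skew-symmetric on divergence-free periodic fields. Second, for almost every $t$, I will bound the quadratic form using the Rayleigh quotient characterization of Proposition~\ref{prop:discrete_coercivity}. Since $K(\rho(t))\ge K_{\min}$, Proposition~\ref{prop:alpha_monotone} gives
\[
z^\top(S+K(\rho(t))M_\omega)z\ge\lambda_{\min}(S+K(\rho(t))M_\omega)\|z\|^2\ge\alpha_{\omega,N}(K_{\min})\|z\|^2 .
\]
Equivalently, one can drop the term $(K(\rho(t))-K_{\min})z^\top M_\omega z\ge0$ directly, since $M_\omega\succeq0$.

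Third, the result is a differential inequality $\frac{d}{dt}\|z\|^2\le-2\alpha_{\omega,N}(K_{\min})\|z\|^2$ valid for almost every $t$. Because $\|z(\cdot)\|^2$ is $C^1$ (indeed absolutely continuous suffices), Gr\"onwall's lemma applies with only a.e.\ validity. It gives $\|z(t)\|\le e^{-\alpha_{\omega,N}(K_{\min})(t-s)}\|z(s)\|$ for all $t\ge s$, uniformly in the scheduling signal. Hence any decay rate one may assert, for instance $\liminf_{t\to\infty}-\frac1t\log\|z(t)\|$, is bounded below by $\alpha_{\omega,N}(K_{\min})$.

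The only delicate point is that $K(\rho(\cdot))$ is merely measurable, so $t\mapsto z^\top(S+K(\rho(t))M_\omega)z$ is only measurable. I will handle this by working in the Carath\'eodory sense: $z$ is absolutely continuous, and the energy identity holds almost everywhere, which is all Gr\"onwall needs. The other point to settle is that the lower bound does not depend on the particular trajectory of $\rho$, only on $K_{\min}$; the monotonicity step secures this.
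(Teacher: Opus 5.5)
Your proposal is correct and follows essentially the paper's route: the paper's proof simply reruns the energy estimate of Proposition~\ref{prop:ROM_decay}, exactly as you do. That estimate consists of the energy identity, then dropping the nonnegative term $(K(\rho(t))-K_{\min})\,z^\top M_\omega z$ to reach $\alpha_{\omega,N}(K_{\min})$, then Gr\"onwall. Your extra care is a welcome precision the paper glosses over: you justify $\langle\mathcal N_N(z),z\rangle=0$ instead of assuming it, and you work with absolutely continuous Carath\'eodory solutions, since $C^1$ regularity would need $K(\rho(\cdot))$ to be continuous.
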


\begin{proof}
The proof follows from the same energy estimate as in
Proposition~\ref{prop:ROM_decay}.
\end{proof}

\subsection{Discussion}
The constant $\alpha_{\omega,N}$ yields a computable lower bound for
dissipation in the reduced system.
It yields a rigorous exponential decay estimate for the reduced-order model and
is consistent with the continuous coercivity mechanism identified in
Section~\ref{sec:energy_stabilization}.

No convergence toward an infinite-dimensional coercivity constant is claimed.
The role of $\alpha_{\omega,N}$ is to provide a sufficient condition for decay at
the level of a chosen finite-dimensional approximation.

\section{Conclusion}

An energy-based analysis has been developed for the three-dimensional
incompressible Navier--Stokes equations with spatially localized volumic
dissipation.
The results are formulated at the level of kinetic energy and remain compatible
with Leray--Hopf weak solutions, without relying on global regularity or
linearization arguments.

The core of the analysis is the identification of a \emph{localized coercivity
property} linking viscous diffusion and spatially restricted damping to global
$L^2$ control of the kinetic energy.
This property is formulated through variational inequalities and
operator-theoretic estimates for a family of parameter-dependent Stokes-type
operators, and provides a sufficient condition for exponential energy decay in a
nonautonomous setting.

At the linear level, coercivity is related to observability and spectral
inequalities for parabolic operators, providing an interpretation of the
coercivity condition in terms of a spectral gap induced by localized
dissipation under suitable geometric conditions.
In the nonlinear Navier--Stokes setting, these properties are used only through
energy estimates and do not rely on nonlinear observability arguments.

A finite-dimensional spectral verification procedure has been introduced to
complement the infinite-dimensional analysis.
The resulting discrete coercivity constant yields a computable lower bound on
the decay rate for divergence-free reduced-order models and preserves key
structural features such as monotonicity with respect to the damping strength
and robustness under time-dependent gain variations.
This construction is not intended as a convergence result, but rather as a
finite-dimensional certification tool consistent with the continuous coercivity
mechanism.

Overall, the results provide a coherent framework in which localized dissipation
can be quantified through the interplay between energy estimates,
operator-theoretic arguments for time-dependent systems, and
observability-type coercivity properties, in a setting adapted to
three-dimensional weak solutions.

\paragraph{Perspectives}
A first direction is to characterize more precisely the geometric conditions on
the damping region that ensure positivity of the localized coercivity constant,
in connection with spectral and observability inequalities.

A second direction concerns the extension of the analysis to more general
configurations, including boundary damping mechanisms and coupled fluid systems.

Another perspective is the study of flows around nontrivial equilibria, where
instabilities may arise and the role of localized dissipation should be analyzed
in connection with linearized (Oseen-type) operators.

Finally, the relation between continuous coercivity properties and reduced-order
spectral quantities deserves further investigation, with the aim of providing
reliable computable indicators for energy-dissipative mechanisms in
high-dimensional fluid models
\section*{Funding}
This research did not receive funding.










\end{document}